\documentclass[12pt,a4paper]{article}
\usepackage{amssymb,comment,amsmath,amsthm}
\usepackage{graphicx}
\usepackage{color}
\usepackage{titling,titlesec}
\usepackage{url}
\usepackage{lipsum}
\titleformat{\subsection}[runin]
            {\normalfont\bfseries}{}{0em}{}[.]

\newfont{\blb}{msbm10 scaled\magstep1}
\newfont{\comp}{cmr12 scaled\magstep1}
\newfont{\compb}{cmr10 scaled\magstep2}
\newfont{\sbb}{cmssbx10 scaled\magstep3}
\newfont{\sbbb}{cmssbx10 scaled\magstep5}
\newfont{\sbs}{cmssbx10 scaled\magstep1}
\newtheorem{theorem}{Theorem}
\newtheorem{lemma}{Lemma}

\newtheorem{problem}{Problem}
\newtheorem{proposition}{Proposition}
\newtheorem{aproposition}{Proposition}

\newcommand{\dm}[1]{\textcolor{blue}{\textbf{[DM:} #1\textbf{]}}}

\parskip=8pt

\newcommand{\PP}{{\mathbb P}}
\newcommand{\EE}{{\mathbb{E}}}

\setlength{\parindent}{0pt} \oddsidemargin -0.1in \evensidemargin
-0.1in \topmargin -0.4in \textheight 9.5in \textwidth 6.5in

\title{Erd\H{o}s-Rogers functions for arbitrary pairs of graphs}

\author{
Dhruv Mubayi\thanks{Department of Mathematics, Statistics and Computer Science, University of Illinois, Chicago, IL 60607. Email: mubayi@uic.edu. Research partially supported by NSF Awards
DMS-1952767 and DMS-2153576 and a Simons Fellowship.} \and
Jacques Verstra\" ete\thanks{Department of Mathematics, University of California, San Diego, CA, 92093-0112 USA.
			Email: jverstraete@ucsd.edu.
			Research supported by NSF award DMS-1952786.  }}


\begin{document}

\date{ }
\setlength{\droptitle}{-3cm}

\maketitle

\begin{abstract}
Let $f_{F,G}(n)$ be the largest size of an induced $F$-free subgraph that every $n$-vertex $G$-free graph is guaranteed to contain. We prove that for any triangle-free graph $F$,
\[ f_{F,K_3}(n) = f_{K_2,K_3}(n)^{1 + o(1)} = n^{\frac{1}{2} + o(1)}.\]
Along the way we give a slight improvement  of a construction of Erd\H os-Frankl-R\"odl for the Brown-Erd\H os-S\'os $(3r-3,3)$-problem  when  $r$ is large.

In contrast to our result for $K_3$, for  any $K_4$-free graph $F$ containing a cycle, we prove there exists $c_F > 0$ such that $$f_{F,K_4}(n) > f_{K_2,K_4}(n)^{1 + c_F} = n^{\frac{1}{3}+c_F+o(1)}.$$

For every graph  $G$, we prove that there exists $\varepsilon_G >0$ such that whenever $F$ is a non-empty graph such that $G$ is not contained in any blowup of $F$, then $f_{F,G}(n) = O(n^{1-\varepsilon_G})$. On the other hand, for  graph $G$ that is not a clique, and every $\varepsilon>0$, we exhibit a  $G$-free graph $F$ such that $f_{F,G}(n) = \Omega(n^{1-\varepsilon})$.
\end{abstract}

\section{Introduction}
Say that a graph is $F$-free if it contains no subgraph isomorphic to $F$. Denote by $f_{F,G}(n)$ the maximum $m$ such that every $n$-vertex $G$-free graph contains an induced $F$-free subgraph on at least $m$ vertices.
Hence the assertion $f_{F,G}(n)<b$ means that there exists an $n$ vertex $G$-free graph $H$ such that every vertex subset of $H$ of size $b$ contains a copy of $F$. 
The case $F = K_s$ and $G = K_t$
is the Erd\H{o}s-Rogers~\cite{ER} function $f_{s,t}(n)$. Classical results in Ramsey Theory~\cite{AKS, K} give $r(3,t) = \Theta(t^2/\log t)$, which shows $f_{K_2,K_3}(n) = \Theta(\sqrt{n\log n})$.
We prove that roughly the same holds for $f_{F,K_3}(n)$ for any triangle-free graph $F$:

\begin{theorem}\label{triangles}
For any triangle-free graph $F$ containing at least one edge,
\[ f_{F,K_3}(n) = n^{\frac{1}{2} + O(\sqrt{\frac{\log\log n}{\log n}})} = f_{K_2,K_3}(n)^{1 + o(1)}.\]
\end{theorem}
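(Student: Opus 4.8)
The lower bound in both displayed equalities is immediate and needs no construction: since $F$ contains an edge, every independent set is induced $F$-free, so $f_{F,K_3}(n)\ge f_{K_2,K_3}(n)=\Theta(\sqrt{n\log n})=n^{1/2+o(1)}$ by the classical bound $r(3,t)=\Theta(t^2/\log t)$. Thus the plan is entirely about the matching upper bound: for each fixed triangle-free $F$ with an edge I want to exhibit, for all large $n$, a triangle-free graph $H=H_n(F)$ on $n$ vertices in which every set of $m:=n^{1/2+O(\sqrt{\log\log n/\log n})}$ vertices spans a copy of $F$. Note first that $\alpha(H)<m$ is forced (a size-$m$ independent set is induced $F$-free), and since $r(3,t)=\Theta(t^2/\log t)$ forces $\alpha=\Omega(\sqrt{n\log n})$ for every $n$-vertex triangle-free graph, the host $H$ must be Ramsey-near-optimal; in particular $H$ cannot be bipartite (otherwise $\alpha(H)\ge n/2$).

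The first thing I would check is whether $H$ can simply be the triangle-free process graph (or a pseudorandom variant). This works when $F$ is a \emph{forest}: there every set of size $m=n^{1/2+o(1)}$ spans more than $\ex(m,F)=O_F(m)$ edges --- the process graph having no sparse subsets at this scale --- which already forces a copy of $F$. But once $F$ contains a cycle this breaks down completely: the triangle-free process graph has induced $C_4$-free (hence girth $\ge 5$) subgraphs on $n^{2/3-o(1)}$ vertices, so its largest induced $F$-free subgraph is far bigger than $n^{1/2+o(1)}$. Hence for $F$ containing a cycle I need a genuinely structured host that is simultaneously Ramsey-near-optimal \emph{and} robustly rich in copies of $F$ inside every window of size $n^{1/2+o(1)}$.

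To build such an $H$, the plan is to route through the Brown--Erd\H{o}s--S\'os $(3r-3,3)$-problem. Fix $r=r(F)$ comparable to $|V(F)|$ and take a linear $r$-uniform hypergraph $\mathcal{G}$ on $[N]$ with no $(3r-3,3)$-configuration --- equivalently, no linear triangle --- and with $|E(\mathcal{G})|=N^{2}/2^{O(\sqrt{\log N\log\log N})}$; such $\mathcal{G}$ is produced by a Behrend-type construction, namely the slight sharpening of the Erd\H{o}s--Frankl--R\"odl construction advertised in the abstract, and it is here that the subpolynomial factor $2^{O(\sqrt{\log n\log\log n})}$ originates. From $\mathcal{G}$ I would build $H$ on $n=\Theta(rN)$ vertices by a suitable incidence/blow-up construction designed so that (i) a triangle in $H$ would force a linear triangle (or a degenerate configuration one can exclude outright) in $\mathcal{G}$, so that $H$ is triangle-free; (ii) the underlying ``frame'' of $\mathcal{G}$ supplies the short odd closed walks needed to keep $H$ and all of its large induced subgraphs non-bipartite; and (iii) any set $S$ of $m$ vertices of $H$ corresponds to a substructure of $\mathcal{G}$ on $\gg N^{1/2+o(1)}$ points which --- since $\mathcal{G}$ has $N^{2-o(1)}$ hyperedges and is therefore robustly dense --- carries enough hyperedges and incidences to be threaded, using linearity to control how partial copies overlap, into a copy of $F$ inside $H[S]$.

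The hard part will be step (iii): showing that the density of $\mathcal{G}$ forces a copy of the \emph{arbitrary} triangle-free $F$ inside every window of size $n^{1/2+o(1)}$, uniformly over all such windows rather than merely typical ones, while keeping $H$ triangle-free and near-Ramsey-optimal throughout. The non-bipartite case --- guaranteeing that every large window carries an odd cycle of exactly the length needed by $F$ together with the rest of $F$ --- and the quantitative tuning needed to push the Behrend-type loss down to $2^{O(\sqrt{\log n\log\log n})}$, which is precisely what forces the improvement to Erd\H{o}s--Frankl--R\"odl, are where the real work concentrates.
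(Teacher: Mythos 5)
Your lower bound and your identification of the key ingredient --- a Behrend-type linear, triangle-free hypergraph as in Erd\H{o}s--Frankl--R\"odl, with the loss $e^{O(\sqrt{\log N\log\log N})}$ coming from exactly that source --- match the paper. But the construction of the host graph from the hypergraph $\mathcal{G}$ is where your proposal goes wrong, and the step you defer as ``the hard part'' is in fact false as you have set it up. You place $H$ on $n=\Theta(rN)$ vertices built from the \emph{points} of $\mathcal{G}$, so that a window of size $m=n^{1/2+o(1)}$ corresponds to roughly $N^{1/2+o(1)}$ points, and you then want the density $|E(\mathcal{G})|=N^{2-o(1)}$ to force hyperedges inside every such window. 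It does not: a linear $r$-uniform hypergraph with $N$ vertices and $N^{2-o(1)}$ edges has average degree $N^{1-o(1)}$ and hence (by the standard Tur\'an/Spencer bound, Lemma 3 of the paper) an independent set of size $\Omega(N^{1-1/(r-1)-o(1)})$, which is at least $N^{1/2}$ for every $r\ge 3$ and is $N^{1-o(1)}$ when $r$ is large. So there are point sets of size far exceeding $N^{1/2+o(1)}$ meeting no hyperedge at all, and the corresponding windows of your $H$ would contain no copy of $F$. No tuning of the ``incidence/blow-up'' construction on the point side can repair this.

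The correct move, which is the paper's, is to make the \emph{hyperedges} of $\mathcal{G}$ the vertices of the host graph: take $G$ with $V(G)=E(\mathcal{G})$, $n=|E(\mathcal{G})|=N^{2-o(1)}$, and join intersecting hyperedges, so that $G$ decomposes into edge-disjoint cliques $K_v$ (one per point $v$), with every triangle of $G$ lying inside a single $K_v$ by linearity and triangle-freeness of $\mathcal{G}$. Inside each $K_v$ one places an independent random blowup of $F$, which keeps the graph triangle-free since $F$ is. The point of this orientation is that a window $Z$ of $N$ vertices of $G$ (i.e.\ $N$ hyperedges) automatically carries $RN$ point--hyperedge incidences concentrated on only $N$ points, so with uniformity $R=\Theta(t\log t\log N)$, $t=|V(F)|$, the probability that $Z$ misses every planted copy of $F$ is at most $t^N(1-1/t)^{RN}<N^{-2N}$, and a union bound over all $\binom{n}{N}\le N^{2N}/N!$ windows finishes the argument. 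This also dissolves your worries about non-bipartiteness and odd cycle lengths: an arbitrary triangle-free $F$ is produced uniformly by the blowups planted in the cliques, with no case analysis. As it stands, your proposal is missing the decisive construction and rests on a density claim that fails, so it does not constitute a proof.
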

Our bound in Theorem~\ref{triangles} is much larger than $f_{K_2,K_3}(n) = \Theta(\sqrt{n\log n})$, and therefore the following problem seems natural.

\begin{problem} \label{probFK3}Find a triangle free $F$ for which $f_{F, K_3}(n)/f_{K_2, K_3}(n) \rightarrow \infty$.
\end{problem}

A large  pseudorandom triangle free graph with many edges seems  an obvious choice for $F$ in Problem~\ref{probFK3}. Perhaps the simpler $F=K_{t,t}$ is another example. More generally, for each $s \ge 3$, one can ask whether there exists a $K_s$-free $F$ for which $f_{F, K_s}(n)/f_{K_{s-1}, K_s}(n) \rightarrow \infty$.

Unlike the case of triangles, it appears that for $s \geq 4$, it is difficult to determine for each $K_s$-free graph $F$ a constant $c = c(F)$ such that $f_{F,K_s}(n) = n^{c + o(1)}$. The second author and Mattheus~\cite{MaV} proved $f_{K_2,K_4}(n) = n^{1/3 + o(1)}$ whereas it is well-known that $f_{K_3,K_4} = n^{1/2 + o(1)}$.
We~\cite{MVER1} recently proved $f_{K_3, K_4}(n) = O(\sqrt n \log n)$ and the proof can be extended to prove that for every $K_4$-free graph $F$, we have
$f_{F, K_4}(n) = O(\sqrt n \log n)$.  Perhaps this can be improved for triangle-free $F$ as follows.

\begin{problem} \label{probK3FK4} Is it true that for every triangle-free graph $F$ there exists $\varepsilon=\varepsilon_F>0$ such that
$f_{F, K_4}(n)< n^{1/2-\varepsilon}$?
\end{problem}
Regardless of whether Problem~\ref{probK3FK4} has an affirmative answer, one might suspect that there exists a sequence of triangle-free graphs where the exponent tends to $1/2$. We propose the following.

\begin{problem} \label{probKttK4}
Prove (or disprove) that $f_{K_{t,t}, K_4}(n) = n^{1/2+o_t(1)}$. \end{problem}
The method of proof of Theorem~\ref{c4} yields  $f_{K_{t,t}, K_4}(n) > n^{2/5-o_t(1)}$.

Our next result shows that for $s \ge 4$, we can find substantially larger $F$-free sets in $K_s$-free graphs than their conjectured~\cite{MV}  minimum independence number, which is  $n^{1/(s-1)+o(1)}$.

\begin{theorem}\label{c4k4}
Let $s \geq 4$ and let $F$ be any graph containing a cycle. Then there exists a constant $c_F > 0$ such that
\[ f_{F,K_s}(n) = \Omega(n^{\frac{1}{s - 1} + c_F}).\]
\end{theorem}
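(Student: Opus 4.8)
\emph{Reduction to one forbidden cycle.} Fix any cycle length $\ell$ that occurs in $F$ (for instance the girth of $F$). Since $C_\ell\subseteq F$, every $C_\ell$-free graph is $F$-free, so it is enough to show that every $n$-vertex $K_s$-free graph contains an induced $C_\ell$-free subgraph on $\Omega(n^{1/(s-1)+c})$ vertices for some $c=c(F)>0$. Let $\gamma_s=\gamma_s(\ell)$ denote the best exponent $\beta$ for which every $n$-vertex $K_s$-free graph is guaranteed an induced $C_\ell$-free subgraph on $\Omega(n^{\beta})$ vertices; we must prove $\gamma_s>1/(s-1)$ for $s\ge 4$.

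\emph{Recursion on $s$.} Suppose $C_\ell$-free subgraphs have been handled for $s-1$, let $H$ be $K_s$-free on $n$ vertices, and fix a threshold $d$. If some $v$ has degree at least $d$, then $H[N(v)]$ is $K_{s-1}$-free on at least $d$ vertices, so by hypothesis it (hence $H$) contains an induced $C_\ell$-free subgraph on $\Omega(d^{\gamma_{s-1}})$ vertices. Otherwise $\Delta(H)<d$ and $H$ has at most $nd^{\ell-1}$ copies of $C_\ell$; keeping each vertex independently with probability $p=1/(2d)$ and deleting one vertex from each surviving copy of $C_\ell$ leaves an induced $C_\ell$-free subgraph on $\Omega(n/d)$ vertices, since the expected number of surviving copies is at most $p^\ell nd^{\ell-1}=np/2^{\ell-1}$, a small fraction of the expected $np$ surviving vertices. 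Choosing $d$ with $d^{\gamma_{s-1}}\asymp n/d$ gives $\gamma_s=\gamma_{s-1}/(1+\gamma_{s-1})$; writing $\gamma_{s-1}=\tfrac1{s-2}+c_{s-1}$ one computes
\[
\gamma_s-\frac1{s-1}=\frac{(s-2)^2\,c_{s-1}}{(s-1)\bigl[(s-1)+(s-2)c_{s-1}\bigr]},
\]
which is positive whenever $c_{s-1}>0$. Thus a strictly positive gain (shrinking, but never reaching $0$) propagates through the boundedly many steps needed to reach any fixed $s$.

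\emph{Base case $s=4$.} When $\ell=3$ the same recursion, started from the trivial $\gamma_3=1$ (a triangle-free graph is its own induced triangle-free subgraph), yields $\gamma_s=1/(s-2)$, i.e.\ $f_{K_3,K_s}(n)=\Omega(n^{1/(s-2)})$; since a triangle-free induced subgraph is $F$-free for any $F$ containing a triangle, this settles all such $F$ with $c_F=\tfrac1{(s-1)(s-2)}$. When $\ell\ge4$ the recursion gives nothing new: its natural base, $\gamma_3(\ell)=\tfrac12$ by Theorem~\ref{triangles} ($f_{C_\ell,K_3}(n)=n^{1/2+o(1)}$), leaves no room above $1/2$. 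One must instead produce the gain directly at $s=4$, i.e.\ prove $f_{C_\ell,K_4}(n)=\Omega(n^{1/3+c_\ell})$ for $\ell\ge4$; this is the content of Theorem~\ref{c4}, obtained by a finer treatment of the bounded-degree regime for $K_4$-free hosts — using that the common neighbourhood of any edge of a $K_4$-free graph is independent, and that this constrains the number of copies of $C_\ell$ — whose optimization produces the exponent $2/5$ when $\ell=4$.

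\emph{The main obstacle} is therefore the single base case above: extracting a polynomial improvement over the independence-number exponent $1/(s-1)$ when the host is only known to be $K_4$-free and a short cycle must be avoided. Given that, the reduction to one forbidden cycle is immediate, and the only thing to check in the recursion is that it does not erode the positive gain, which the displayed identity guarantees.
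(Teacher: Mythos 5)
Your reduction to a single forbidden cycle and your recursion on $s$ are sound and essentially identical to the paper's: the paper also passes from $K_s$-free to $K_{s-1}$-free via a maximum-degree vertex, uses a Tur\'an-type bound in the bounded-degree regime, and propagates the gain through the recursion $\alpha(s)=1-\frac{1}{1+\alpha(s-1)}=\frac{\alpha(s-1)}{1+\alpha(s-1)}$; your displayed identity for $\gamma_s-\frac{1}{s-1}$ is correct and plays the same role as the paper's estimate $\alpha_k(s)\ge \frac{1}{s-1}+\frac{\epsilon}{4}$. The triangle case ($\ell=3$, giving $c_F=\frac{1}{(s-1)(s-2)}$) is also fine.

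However, there is a genuine gap: for $F$ whose shortest cycle has length $\ell\ge 4$, you correctly identify that the recursion from $s=3$ gives nothing beyond $1/3$ at $s=4$, so the whole theorem rests on proving $f_{C_\ell,K_4}(n)=\Omega(n^{1/3+c_\ell})$ --- and at that point you simply invoke Theorem~\ref{c4} rather than prove it. That base case \emph{is} the content of the theorem you were asked to prove (the paper explicitly reduces Theorem~\ref{c4k4} to Theorem~\ref{c4} and then spends the section proving the latter), so citing it is circular in this context. Moreover, the one-line sketch you offer (``the common neighbourhood of any edge of a $K_4$-free graph is independent, and this constrains the number of copies of $C_\ell$'') does not reconstruct the actual mechanism. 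The paper's argument is a dichotomy on the maximum number $\triangle=\delta d^{k-1}$ of $k$-cycles through a vertex: if $\delta$ is small, one applies Spencer's bound (Lemma~\ref{spencer}) to the $k$-uniform hypergraph of cycle vertex sets to get a $C_k$-free set of size roughly $n/(\delta^{1/(k-1)}d)$; if $\delta$ is large, Lemma~\ref{ckprop} extracts from the cycles through a single vertex two sets $X,Y$ with $e(X,Y)\ge\gamma|X||Y|$ and $|X|,|Y|\ge\gamma d$ (via dyadic pigeonholing along the cycle), then dependent random choice with $s=3$ produces $Z\subseteq Y$ in which every pair has many common neighbours, and $K_4$-freeness converts either $Z$ itself or a common neighbourhood of an edge of $Z$ into a large independent set; finally these two bounds are balanced against Tur\'an and the triangle-free-neighbourhood bound in the window $n^{2/3-\epsilon_k}\le d\le n^{2/3+2\epsilon_k}$. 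None of this (in particular the dense-pair extraction and the dependent random choice step, which is where the polynomial gain over $1/3$ actually comes from) appears in your proposal, so as written it assumes precisely the hard step. Incidentally, the exponent $2/5$ you attribute to the case $\ell=4$ is stated in the paper only for $K_{t,t}$; Theorem~\ref{c4} itself gives $\epsilon_k=\frac{1}{100(k-1)}$.
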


If $F$ is a cycle, then this bound is almost tight for $K_4$, using the following proposition. Write $r(H, t)$ for the ramsey number of $H$ versus a clique on $t$ vertices.

\begin{proposition} \label{prop} For any graphs $F$ and $G$, 
    $$f_{F, G}(r(G, t)-1) < r(F, t).$$
\end{proposition}
Indeed,  let $H$ be a $G$-free  graph on $r(G, t)-1$ vertices with no independent set of size $t$. Then the maximum $F$-free subset of $H$ has size less than $m:=r(F, t)$ for any set of $m$ vertices in $H$ must contain either a copy of $F$ or an independent set of size $t$.

When $F=C_{2k}$ or $F=C_{2k-1}$ we have $r(F, t) = O(t^{k/(k-1)}/(\log t)^{1/(k-1)})$ (\cite{LZ, Sudakov}). Moreover, 
 recent results of~\cite{MaV}, yield $r(K_4, t) = \Omega(t^3/\log^4 t)$. Putting these together in Proposition~\ref{prop} yields
 \begin{equation} \label{eqn:upper}f_{F, K_4}(n) = O(n^{\frac{k}{3k-3}} (\log n)^{\frac{4k-3}{3k-3}}) \qquad \hbox{ for } \qquad \hbox{ $F\in\{C_{2k}, C_{2k-1}\}$}.\end{equation}
The constant in Theorem~\ref{c4k4} satisfies $c_{F}= \Theta(1/k)$ for $F=C_k$ and with (\ref{eqn:upper}) this gives
\begin{equation}\label{eqn:kcycle} f_{C_k,K_4}(n) = f_{K_2,K_4}(n)^{1 + \Theta(\frac{1}{k})+o(1)} = n^{\frac{1}{3} + \Theta(\frac{1}{k})+o(1)}.
\end{equation}

This shows that there are graphs $F$ for which $f_{F,K_4}(n)$ does not have the same exponent as $f_{K_2,K_4}(n)$ or $f_{K_3,K_4}(n)$, in contrast to the case of $f_{F,K_3}(n) = f_{K_2,K_3}(n)^{1 + o(1)}$ from Theorem \ref{triangles}.
Using the graphs constructed in~\cite{MaV}, and following the analysis along the lines 
of Janzer and Sudakov~\cite{JS}, Balogh et al.~\cite{Balogh} improved the upper bound in (\ref{eqn:upper}) slightly in the case of even cycles, by showing
\[ f_{C_{2k},K_4}(n) = O(n^{\frac{k}{3k-2}}(\log n)^{\frac{6k}{3k-2}}).\]
They also showed for complete multipartite graphs 
\[ f_{K_{s_1,...,s_r},K_{r + 2}}(n)= O(n^{\frac{2s-3}{4s-5}}(\log n)^3),\]
where $s=\sum s_i$. In the special case of 4-cycles this gives $f_{C_4,K_4}(n) = O(n^{5/11})$. 

We now address general Erd\H{o}s-Rogers functions $f_{F,G}(n)$. For a given $G$, the first natural question  is when $f_{F,G}(n)$ can be $n^{1 - o(1)}$ as $|V(F)| \rightarrow \infty$.
A {\em blowup} of a graph $F$ is obtained by replacing each vertex $v$ of $F$ with an independent set $I_v$ and
adding all edges between $I_u$ and $I_v$ whenever $\{u,v\} \in E(F)$. The graph $F$ is a homomorphic image of $G$ if and only if some blowup of $F$ contains $G$.
Consequently, we say that $F$ is {\em $\mbox{hom}(G)$-free} if no blowup of $F$ contains $G$.
For instance, if $G$ is bipartite and $F$ contains at least one edge, then blowups of $F$ contain arbitrarily large complete bipartite graphs, and therefore
$F$ is not $\mbox{hom}(G)$-free. This condition turns out to determine when Erd\H{o}s-Rogers functions $f_{F,G}(n)$ can approach $n^{1 - o(1)}$ as
$|V(F)| \rightarrow \infty$:

\begin{theorem}\label{general}
For every graph $G$, there exists $\varepsilon_G >0$ such that if $F$ is any $\mbox{hom}(G)$-free graph containing at least one edge, then
\[ f_{F,G}(n)  = O(n^{1-\varepsilon_G}).\]
On the other hand,  if $G$ is not a clique, then for any $\varepsilon >0$ there exists a $G$-free graph $F$ such that $f_{F,G}(n)  =\Omega(n^{1- \varepsilon})$.
\end{theorem}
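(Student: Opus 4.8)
For the first statement it suffices, given any $\mathrm{hom}(G)$-free graph $F$ with an edge, to construct an $n$-vertex graph $H$ admitting no homomorphism $G\to H$ (so in particular $G$-free) in which every induced subgraph on more than $n^{1-\varepsilon_G}$ vertices contains a copy of $F$; this gives $f_{F,G}(n)\le n^{1-\varepsilon_G}$. We may assume $G$ is not bipartite, since otherwise $G\to K_2\subseteq F$ and the hypothesis is vacuous. The plan is to take $H=R[I_s]$, the blow-up of a skeleton $R$ on $N=n/s$ vertices by independent sets of size $s$. A blow-up admits a homomorphism from $G$ only if $R$ does, so it is enough that $G\not\to R$. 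For the ``$F$ everywhere'' property, given $S\subseteq V(H)$ of size $b$ put $A=\{u\in V(R):|S\cap I_u|\ge v(F)\}$; one checks $|A|>N^{1-c}$ whenever $b>N^{1-c}s+v(F)N$, and if $F\to R[A]$ then distributing representatives of the fibres of the homomorphism into $S$ produces a copy of $F$ inside $S$. Thus the first statement reduces to a skeleton lemma: there is $c=c(G)>0$, independent of $F$, so that for every $N$ there is a graph $R_N$ on $N$ vertices with $G\not\to R_N$ and $F\to R_N[A]$ for every $A$ with $|A|\ge N^{1-c}$. Granting it, take $N$ a small power of $n$ and $s=n/N$; the displayed estimate then gives the bound for any $\varepsilon_G<c/(c+1)$.

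To prove the skeleton lemma, write $r=\chi(F)$ and recall $F\to K_r$. When $r<\chi(G)$ one may replace the requirement on $R_N$ by the stronger ``$K_r\subseteq R_N[A]$ for all $A$ with $|A|\ge N^{1-c}$'', and such homomorphically $G$-avoiding, everywhere-$K_r$-dense graphs are exactly the graphs witnessing (a generalization of) the sublinear Erd\H{o}s--Rogers upper bounds: for $G=K_t$ they witness $f_{r,t}(N)\le N^{1-c_{r,t}}$, and there $G\not\to R_N$ is automatic since $K_t\not\to R_N$ just means $R_N$ is $K_t$-free. The delicate range is $r$ close to $\chi(G)$: then $R_N$ cannot be taken $K_r$-dense, $G\not\to R_N$ must be forced structurally---when $r=2$ a sufficiently large odd girth suffices and still permits $\alpha(R_N)=N^{1-c}$---and $R_N$ is built by extending the construction behind Theorem~\ref{triangles}, which rests on a slightly sharpened Erd\H{o}s--Frankl--R\"odl partial Steiner system. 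Doing this for all $\mathrm{hom}(G)$-free $F$ at once, with a single exponent $c(G)$, is the step I expect to be the main obstacle; note that the case $G=K_3$ is already Theorem~\ref{triangles}, so the general argument must subsume that construction.

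For the second statement, fix $\varepsilon>0$; we must exhibit a $G$-free graph $F=F_\varepsilon$ (growing as $\varepsilon\to 0$) so that every $G$-free graph $H$ on $n$ vertices has an induced $F$-free subgraph on $\Omega_\varepsilon(n^{1-\varepsilon})$ vertices. The point of ``$G$ not a clique'' is that $\overline G$ contains an edge, which is what allows $F_\varepsilon$ to be large while still $G$-free; we take $F_\varepsilon$ to be a sufficiently large $G$-free graph of $1$-density exceeding $1/\varepsilon$ (for instance a graph of girth larger than $v(G)$ but with many edges, when $G$ has short cycles; a complete multipartite graph or a star-forest in the degenerate cases where already a fixed $F$ works). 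The key lemma---that any $G$-free $H$ has a large induced $F$-free subgraph---is proved by a two-step sparsification of $H$: first, via Szemer\'edi's regularity lemma and the Erd\H{o}s--Stone--Simonovits theorem for $G$, pass to a large induced subgraph that is ``structured'' as a union of pieces each of which is either almost $(\chi(G)-1)$-partite, or of bounded size, or of edge-density $O(n^{-c})$; then on the near-multipartite pieces take a large independent-ish set (which is $F$-free since $F$ has an edge), on the bounded pieces delete one vertex per copy of $F_\varepsilon$, and on the sparse pieces a random $n^{1-\varepsilon}$-vertex subset lies below the appearance threshold of $F_\varepsilon$, so after deleting an $o(1)$-fraction of vertices it is $F$-free. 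The first step is a fairly standard application of regularity and stability, but the second step must genuinely use the missing edge of $G$, and making it work---together with pinning down $F_\varepsilon$ for an arbitrary non-clique $G$, in particular handling ``medium-density'' $G$-free hosts that are neither near-extremal nor sparse---is the main obstacle for this half.
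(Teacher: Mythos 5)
Both halves of your proposal reduce the theorem to lemmas that you explicitly leave unproved, and in each case the deferred lemma is where essentially all of the difficulty lives, so this is a genuine gap rather than a complete proof. For the upper bound, your ``skeleton lemma'' asks for a graph $R_N$ with $G\not\to R_N$ in which \emph{every} induced subgraph on $N^{1-c}$ vertices admits a homomorphism from the given $\mbox{hom}(G)$-free $F$. Already for $G=C_5$ this forces $R_N$ to have odd girth at least $7$ while every polynomially large induced subgraph must receive a homomorphism from an arbitrary graph $F$ of odd girth at least $7$ (which may have enormous chromatic number); nothing in your sketch produces such an $R_N$, and the chromatic-number casework you outline does not address the regime where $F$ is far from multipartite. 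The paper avoids any universality requirement by building the host graph \emph{out of $F$ itself}: it takes the square of an $n\times n$ bipartite graph of girth $>2|V(G)|$ restricted to one side, obtaining $n$ edge-disjoint cliques of order $d=n^{1/3|V(G)|}$, and plants an independent uniformly random blowup of $F$ inside each clique. Since $G$ contains a cycle, every copy of $G$ lies inside a single clique, where the graph is a blowup of $F$ and hence $G$-free by the $\mbox{hom}(G)$-free hypothesis; a first-moment calculation over the random colorings then shows every set of $O(n/d)$ vertices induces a copy of $F$. Your blow-up-by-independent-sets reduction is internally consistent, but it converts the problem into one at least as hard as the original.

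For the lower bound, the regularity/stability route you sketch does not go through: a $G$-free graph for non-clique $G$ (say $G=C_5$, where $K_{n/2,n/2}$ is $G$-free) need not decompose into near-$(\chi(G)-1)$-partite, bounded, and $O(n^{-c})$-sparse pieces in any way that lets you extract an $F$-free set --- an ``independent-ish'' subset of a dense near-bipartite piece still contains every bipartite subgraph of $F$, and your candidate $F_\varepsilon$ (a high-girth, high-density $G$-free graph) is not shown to avoid such pieces. The paper instead constructs $F$ directly and then proves a counting statement with no structural decomposition: choosing nonadjacent $v,w\in V(G)$, it forms $G^+$ by cloning, sets $G^*=G^+-w$ and $G^{**}=G^+-\{v,w\}$, and plants random copies of $G^*$ into the edges of a $t$-vertex $r$-uniform hypergraph of girth $>r+1$ (Proposition~B) whose edges meet every large vertex subset in a prescribed way; $2$-connectivity of $G$ (the reducible cases being handled separately) confines copies of $G$ to single hyperedges, so $F$ is $G$-free. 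The key step, which your sketch has no analogue of, is the Erd\H{o}s--Rado sunflower argument: if a $G$-free $H$ contained more than $T\binom{n}{b}$ copies of $F$ with $b=t^{1-\delta}$, pigeonhole plus a sunflower of size $R\binom{t-1}{r-1}$ would produce two hyperedges sharing an $(r-1)$-set carrying identical copies of $G^{**}$, reassembling into $G^+\supseteq G$ inside $H$. With only $O(n^{b})$ copies of $F$, Spencer's hypergraph independence lemma yields an $F$-free set of size $\Omega(n^{1-(b-1)/(t-1)})=\Omega(n^{1-\varepsilon})$ for $t$ large. You correctly identified that ``few copies of $F$ plus deletion'' should finish the proof, but the mechanism that bounds the number of copies is the sunflower argument, not sparsification of a regularity partition.
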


If $G$ is a clique, then every $G$-free graph is also $\mbox{hom}(G)$-free, hence the first part of Theorem~\ref{general} applies to all $G$-free graphs $F$ when $G$ is a clique. As mentioned earlier, in the case $G = K_4$, it turns out $f_{F,G}(n) = O(n^{1/2}\cdot \log n)$ due to our results in~\cite{MVER1},
so we may take $\varepsilon_{K_4} \geq 1/2$. It appears to be difficult to determine the largest possible value of $\varepsilon_G$ for each
graph $G$ in Theorem \ref{general}.

\section{Proof of Theorem \ref{triangles}}

Ajtai, Koml\'{o}s and Szemer\'{e}di~\cite{AKS} and Shearer~\cite{Sh} proved that $r(3,t) = O(t^2/\log t)$. Using the random triangle-free
process, Kim~\cite{K} (see also Fiz Pontiveros, Griffths and Morris~\cite{FGM} and Bohman and Keevash~\cite{BK0}) showed
$r(3,t) = \Omega(t^2/\log t)$, thereby determining the order of magnitude of $r(3,t)$. Consequently, for any non-empty graph $F$,
\[ f_{F,K_3}(n)  \geq f_{K_2,K_3}(n) = \Theta(\sqrt{n\log n}).\]
To prove Theorem \ref{triangles} we employ a construction of Erd\H{o}s, Frankl and R\"{o}dl~\cite{EFR} of a linear triangle-free $R$-uniform $N$-vertex hypergraph. In the appendix, we give present a minor modification of their construction which gives a bound that is better than the bound from~\cite{EFR} when $R > \log N$; they prove a lower bound  $N^2/e^{O(\log R \sqrt{\log N})}$ while our bound is $N^2/e^{O( \sqrt{\log R\log N})}$.

\begin{theorem} {\bf (Proposition A in Appendix)} \label{thmEFR}
For any $R, N \ge 3$ and $N\ge R \geq \log N$, there exists an $N$-vertex $R$-uniform hypergraph $H$ with the following properties:
\begin{center}
\begin{tabular}{lp{5in}}
$\mathrm{(i)}$ & $|E(H)| \geq N^2/R^{8\sqrt{\log_R N}}$ \\
$\mathrm{(ii)}$ & $H$ is linear, that is, for any distinct edges $e,f \in H$, $|e \cap f| \leq 1$. \\
$\mathrm{(iii)}$ & $H$ is triangle-free, that is, for any three distinct edges $e,f,g \in H$, if $|e \cap f| = |f \cap g| = |g \cap e| = 1$ then $|e \cap f \cap g| = 1$.
\end{tabular}
\end{center}
\end{theorem}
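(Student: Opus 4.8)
The plan is to build $H$ as a ``grid of lines'' and to restrict the slopes to a Behrend-type set; the gain over~\cite{EFR} comes from optimizing the Behrend parameters jointly in $R$ and $N$ rather than treating $R$ as a secondary parameter. Let $M$ be a parameter, let $m$ be a prime with $3RM \le m \le 6RM$ (Bertrand's postulate), and take the vertex set $V = \mathbb{Z}_m \times \{0,1,\dots,R-1\}$, so $N = Rm \asymp R^2M$. For a set $A \subseteq \{0,1,\dots,M-1\}$ of ``slopes'', to be specified, and each $x \in \mathbb{Z}_m$, $d \in A$, put into $H$ the edge
\[
 e_{x,d} \ =\ \bigl\{\, (x+id \bmod m,\ i)\ :\ 0 \le i \le R-1 \,\bigr\}.
\]
Then $e_{x,d}$ has exactly $R$ vertices (one per column) and $(x,d) \mapsto e_{x,d}$ is injective (the column-$0$ vertex recovers $x$, the column-$1$ vertex then recovers $d$), so $|E(H)| = m|A|$. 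Linearity is immediate: if $e_{x,d}$ and $e_{x',d'}$ meet in two vertices, in columns $i \ne j$, then $(i-j)(d-d') \equiv 0 \pmod m$, and since $m$ is prime with $m > R > |i-j|$ and $|d-d'| < m$, this forces $d = d'$ and then $x = x'$.

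I claim triangle-freeness is equivalent to an additive condition on $A$. Suppose $e_1, e_2, e_3$ are distinct, $|e_1 \cap e_2| = |e_2 \cap e_3| = |e_3 \cap e_1| = 1$, and the three intersection vertices are not all equal; then they are pairwise distinct, and since the two intersection vertices lying on a common $e_t$ occupy distinct columns of $e_t$, they lie in three distinct columns $i,j,k$. Writing $e_t = e_{x_t,d_t}$ and equating coordinates in the shared column in each case gives
\[
 x_1 - x_2 \equiv i(d_2-d_1),\qquad x_2 - x_3 \equiv j(d_3-d_2),\qquad x_3 - x_1 \equiv k(d_1-d_3) \pmod m,
\]
and summing yields $(k-i)d_1 + (i-j)d_2 + (j-k)d_3 \equiv 0 \pmod m$. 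The integers $\lambda_1 = k-i$, $\lambda_2 = i-j$, $\lambda_3 = j-k$ are nonzero, sum to $0$, and satisfy $|\lambda_t| \le R-1$; since $|\lambda_1d_1 + \lambda_2d_2 + \lambda_3d_3| < 3RM \le m$, the congruence is an equality in $\mathbb{Z}$. Hence it suffices to choose $A$ so that \emph{every} relation $\lambda_1a_1 + \lambda_2a_2 + \lambda_3a_3 = 0$ with $\lambda_1+\lambda_2+\lambda_3 = 0$, $1 \le |\lambda_t| \le R-1$, and $(a_1,a_2,a_3) \in A^3$ forces $a_1 = a_2 = a_3$: indeed $d_1 = d_2 = d_3$ makes $e_1 = e_2$, a contradiction. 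The point to notice is that, since $\sum\lambda_t = 0$ and no $\lambda_t$ vanishes, exactly one $\lambda_t$ equals the negative of the sum of the other two and has the opposite sign; thus any such relation exhibits one of $a_1,a_2,a_3$ as a \emph{convex} combination of the other two.

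This is precisely what a Behrend-type set with coefficients bounded by $R$ provides. Let $A$ consist of the integers $\sum_{\ell=0}^{k-1} t_\ell B^\ell$ with digits $t_\ell \in \{0,\dots,D-1\}$ and $\sum_\ell t_\ell^2 = \rho$, where $B = 4RD$ is large enough that in $\lambda_1a_1 + \lambda_2a_2 + \lambda_3a_3$ (whose coefficients have absolute values summing to at most $2(R-1)$) no carries occur; then the integer equation forces the same equation in each digit, and as all three digit vectors lie on the sphere of squared radius $\rho$ with one a convex combination of the other two, they coincide, so $a_1 = a_2 = a_3$. A pigeonhole choice of $\rho$ gives $|A| \ge D^{k-2}/(2k)$, while $\max A < B^k$; setting $M := B^k = (4RD)^k$ the Behrend loss is $M/|A| \le 2k(4R)^{k-2}M^{2/k}$, and the choice $k \asymp \sqrt{(\log M)/\log R} \asymp \sqrt{\log_R N}$ balances the two dominant terms and makes the loss $e^{O(\sqrt{\log M \cdot \log R})}$. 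For $R \le N^{1/16}$ we have $\log M = \Theta(\log N)$, hence
\[
 |E(H)| \ =\ m|A|\ \ge\ \frac{3RM^2}{e^{O(\sqrt{\log M\cdot\log R})}}\ \ge\ \frac{N^2}{R^{O(\sqrt{\log_R N})}},
\]
while for $R > N^{1/16}$ the claimed bound is at most $1$, so a single edge on $R$ vertices already suffices. The only real work is the final bookkeeping: there is no conceptual obstacle beyond the classical Behrend construction, but to land the explicit constant $8$ one has to track the interplay of $B$, $D$, $k$ and $m$ together with the range of $R$ versus $N$, to round $k$ to an integer and $m$ to a prime (and pad with isolated vertices so that the vertex count is exactly $N$) without losing too much. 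The one point needing a little care beyond the $R = 3$ (Ruzsa--Szemer\'edi) case is checking that the convexity argument survives \emph{every} sign pattern of $(\lambda_1,\lambda_2,\lambda_3)$, not only the ``balanced'' pattern $(1,-2,1)$ arising from a three-term arithmetic progression.
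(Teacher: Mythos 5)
Your construction is correct and runs on the same engine as the paper's proof: a Behrend-type set of points on a sphere, with triangle-freeness reduced to the observation that a zero-sum relation $\lambda_1a_1+\lambda_2a_2+\lambda_3a_3=0$ with small nonzero coefficients writes one of the three points as a proper convex combination of the other two, which strict convexity of the sphere forbids. The realization differs in real but inessential ways. The paper works directly in $\mathbb{Z}^d$: it takes $V(H)=X_1\cup\cdots\cup X_R$ with $X_i=[ir]^d$, edges are the genuine arithmetic progressions $\{x,x+a,\ldots,x+(R-1)a\}$ with $a$ a positive integer point on the sphere of radius $r$, the set $A$ is counted from below via Lagrange's four-squares theorem, and no modular reduction or carry analysis is needed. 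You instead work in $\mathbb{Z}_m\times\{0,\ldots,R-1\}$ with $m$ prime, re-encode the sphere through base-$B$ digits, and count $A$ by pigeonhole on the squared radius; this forces the two extra checks that $m>3RM$ lifts the congruence to an equation over $\mathbb{Z}$ and that $B=4RD$ prevents carries for every sign pattern of $(\lambda_1,\lambda_2,\lambda_3)$, both of which you handle correctly. The one loose end is quantitative: the statement asserts the explicit exponent $8\sqrt{\log_R N}$ and you deliver only $O(\sqrt{\log_R N})$, deferring the constant to ``bookkeeping.'' This is not a conceptual gap, and your parameters do land it: with $k=\lceil\sqrt{\log_R N}\rceil$ the dominant contribution to $\log_R$ of the loss is $k\log_R(4R)+\tfrac{2}{k}\log_R M\le 4\sqrt{\log_R N}+O(1)$, and in the regime $R\le N^{1/16}$ one has $\sqrt{\log_R N}\ge 4$, so the residual $R^{O(1)}$ and $k^{O(1)}$ factors (note $k\le\sqrt{R}$ since $R\ge\log N$) are comfortably absorbed into $R^{8\sqrt{\log_R N}}$ — but since the constant is part of the statement, this computation should be written out rather than asserted.
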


{\bf Proof of Theorem~\ref{triangles}.}
We are to prove that
\[ f_{F,K_3}(n) = n^{\frac{1}{2} + O(\sqrt{\frac{\log\log n}{\log n}})}.\]
Let $t = |V(F)|$. 
We apply Theorem~\ref{thmEFR} with $R = \lceil 3t\log t \log N\rceil$, where $t = |V(F)|$. Then (i) yields
\begin{equation}\label{eh}
 |E(H)| \geq \frac{N^2}{R^{8\sqrt{\log_R N}}} = N^{2 - O(\sqrt{\frac{\log\log N}{\log N}})}.
 \end{equation}
Let $G$ be the graph whose vertex set is $E(H)$ and where $E(G) = \{e,f \in E(H) : e \cap f \neq \emptyset\}$.
For each vertex $v \in V(H)$, the set $K_v = \{e \in E(H) : v \in e\}$ induces a clique in $G$.
If for some distinct $v,w \in V(H)$ there exist distinct $e,f \in K_v \cap K_w$, then by definition $v,w \in e \cap f$,
which contradicts that $H$ is linear. Therefore $|V(K_v) \cap |V(K_w)| \leq 1$ for all distinct $v,w \in V(H)$, and the cliques
$K_v$ are edge-disjoint in $G$. Similarly, since $H$ is triangle-free, every triangle in $G$ is contained in a clique $K_v$ for some $v \in V(H)$.

Independently for $v \in V(H)$, let $\chi_v : V(K_v) \rightarrow V(F)$ be a random coloring of $K_v$. Next, we remove all edges $\{x,y\}$ of $G[K_v]$
such that $\chi_v(x) = \chi_v(y)$ or $\chi_v(x)\chi_v(y) \not\in E(F)$. In other words, we have placed a blowup of a copy of $F$ in each set $K_v$. 

Since $F$ contains no triangle, the resulting graph $G^*$ is triangle-free. We now prove that $G^*$ has no $F$-free induced subgraph
with at least $N$ vertices. To see this, fix a set $Z$ of $N$ vertices of $G^*$. The probability that $Z$ is an $F$-free  set of $G^*$ is 
\[ \PP(Z) \le \prod_{v \in V(H)} t \cdot \Bigl(1 - \frac{1}{t}\Bigr)^{|K_v \cap Z|}.\]
Since $H$ is $R$-uniform,
\[ \sum_{v \in V(H)} |K_v \cap Z| = \sum_{e \in Z} |e| = R|Z| = RN.\]
Using $(1 - x)^y \leq e^{-xy}$ for $0 \leq x \leq 1$ and $y \geq 1$,
\[ \PP(Z) \le t^N \Bigl(1 - \frac{1}{t}\Bigr)^{RN} \leq e^{N\log t - RN/t} < N^{-2N}.\]
The number of sets of size $N$ in $G^*$ is no more than
\[ {N^2 \choose N} \leq \frac{N^{2N}}{N!}.\]
Therefore the expected number of $F$-free sets $Z$ of size $N$ in $G^*$ is less than $1/N!$. We may therefore select $G^*$ so as to contain no $F$-free
subgraph with at least $N$ vertices. Since $G^*$ is triangle-free, and $n := |V(G^*)| = |E(H)|$, the bound (\ref{eh}) gives
\[ f_{F,K_3}(n) < N = n^{\frac{1}{2} + O(\sqrt{\frac{\log\log n}{\log n}})}.\]
This proves the theorem. \qed

\section{Proof of Theorem \ref{c4k4}: Large $C_k$-free subsets}

To prove Theorem \ref{c4k4}, it is sufficient to prove the following theorem:

\begin{theorem}\label{c4}
For any graph $F$ containing a cycle $C_k$, there exists $\epsilon_k > 1/100k$ such that
\[ f_{F,K_4}(n) = \Omega(n^{\frac{1}{3} + \epsilon_k}).\]
\end{theorem}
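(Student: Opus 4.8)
The plan is to reduce to the case that $F$ is triangle-free and then prove the stronger statement that every $n$-vertex $K_4$-free graph $H$ contains an induced subgraph on at least $n^{1/3+\varepsilon_k}$ vertices with no copy of $C_k$ (not necessarily induced); since $C_k\subseteq F$, such a subgraph is $F$-free. If $F$ contains a triangle, every triangle-free induced subgraph of $H$ is already $F$-free, and the classical bound $f_{K_3,K_4}(n)=\Omega(\sqrt n)$ --- provable by the high-degree/deletion dichotomy used below --- gives $f_{F,K_4}(n)=\Omega(n^{1/2})$, far more than is needed since $\tfrac12>\tfrac1{100k}$. So assume $F$ is triangle-free; then $k\ge 4$, and I would take, say, $\varepsilon_k=\tfrac1{50k}$.

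Fix $H$ and put $A=\alpha(H)$, $D=\Delta(H)$. If $A\ge n^{1/3+\varepsilon_k}$ we finish with a maximum independent set. Otherwise $A<n^{1/3+\varepsilon_k}$, and I would first record two elementary consequences of $K_4$-freeness. For every $v$ the set $N(v)$ induces a triangle-free graph, which (using $r(3,t)=O(t^2/\log t)$, or even an easy greedy bound) has independence number $\Omega(\sqrt{|N(v)|})$; hence $\sqrt{|N(v)|}\le\alpha(H)=A$, so in particular $D\le A^2<n^{2/3+2\varepsilon_k}$, and more generally $\alpha(H)\ge\sqrt{\Delta(H)}$ always. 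Also, greedily building an independent set shows some vertex has degree $\ge n/(A+1)$. (If $A<n^{1/3-2\varepsilon_k}$ then that vertex has degree $>n^{2/3+2\varepsilon_k}/2$, and the independent set inside its triangle-free neighborhood already has size $\Omega(n^{1/3+\varepsilon_k})$; so the substantive range is $A\approx n^{1/3}$.) In general I would use the deletion method: if $H$ has at most $X$ copies of $C_k$, then retaining each vertex independently with probability $p$ and deleting one vertex from each surviving copy produces an induced $C_k$-free subgraph of expected size at least $pn-p^kX$, which optimized over $p$ is $\Omega_k\!\big((n^k/X)^{1/(k-1)}\big)$; a direct calculation shows this exceeds $n^{1/3+\varepsilon_k}$ once $X\le n^{(2k+1)/3-(k-1)\varepsilon_k}$.

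The crux is therefore to beat the trivial count $X\le\tfrac1{2k}nD^{k-1}$ on the number of copies of $C_k$: with only $D\le A^2<n^{2/3+2\varepsilon_k}$ this gives $X\le n^{(2k+1)/3+2(k-1)\varepsilon_k}$, too large by the polynomial factor $n^{3(k-1)\varepsilon_k}$. What is needed is that a $K_4$-free graph with independence number $A$ and maximum degree $D$ cannot resemble a disjoint union of complete bipartite graphs (which would attain the trivial count but have enormous independence number); quantitatively one expects a bound of the rough shape $X\le\mathrm{poly}(k)(\log n)^{O(k)}\cdot A\,D^{k-1/2}$, which is smaller than $nD^{k-1}$ by the polynomial factor $n/(A\sqrt D)$ (polynomial precisely because $\alpha(H)\ge\sqrt{\Delta(H)}$). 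Substituting $A<n^{1/3+\varepsilon_k}$, $D<n^{2/3+2\varepsilon_k}$ into $A\,D^{k-1/2}$ gives $X\le n^{2k/3+2k\varepsilon_k}$, which is below the threshold $n^{(2k+1)/3-(k-1)\varepsilon_k}$ for our choice of $\varepsilon_k$, with room to spare, so the deletion method then yields the theorem for $n$ large in terms of $k$.

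To establish the $C_k$-count I would pass from walks to eigenvalues, $X=\#C_k(H)\le\tfrac1{2k}\mathrm{tr}\big(A(H)^k\big)=\tfrac1{2k}\sum_i\lambda_i^k$, peel off the top eigenvalue ($\lambda_1\le D$, contributing at most $D^k\le A\,D^{k-1/2}$), and bound the remaining terms by showing a $K_4$-free graph with independence number $A$ has only about $A/\sqrt D$ eigenvalues exceeding a suitable threshold $\mu\approx\big(AD^{k-3/2}/n\big)^{1/(k-2)}$ --- morally, each ``dense cluster'' producing a large eigenvalue absorbs a $\sqrt D$-sized chunk of the independence number --- after which $\sum_{\lambda_i\le\mu}\lambda_i^k\le\mu^{k-2}\sum_i\lambda_i^2\le\mu^{k-2}nD$ is of the right order. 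The main obstacle is exactly this spectral estimate (equivalently, a direct combinatorial proof of the $C_k$-count, perhaps after first passing to an almost-regular induced subgraph of $H$ and exploiting that its neighborhoods are triangle-free graphs of small independence number); the remaining ingredients --- the reduction, the dichotomy on $\alpha(H)$, and the deletion computation --- are routine, and the generous constant $\tfrac1{100k}$ leaves ample slack to absorb all lower-order losses.
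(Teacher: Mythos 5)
Your overall skeleton --- handling $F\supseteq K_3$ separately, the trichotomy on $\alpha(H)$ and $\Delta(H)$, and the deletion/Spencer step converting a bound $X$ on the number of $k$-cycles into a $C_k$-free set of size $\Omega((n^k/X)^{1/(k-1)})$ --- is sound and coincides with roughly half of the paper's argument (its ``Bound 1'' and final case analysis). But the entire difficulty of the theorem is concentrated in the step you yourself label the crux and then do not prove: that a $K_4$-free graph with independence number $A$ and maximum degree $D$ has at most about $A\,D^{k-1/2}$ copies of $C_k$ up to polylogarithmic factors. The spectral sketch does not substantiate it. You supply no proof and no reference for the claim that such a graph has only $\approx A/\sqrt D$ eigenvalues above your threshold $\mu$; inertia-type bounds control the number of positive eigenvalues by $n-\alpha$, not the number of \emph{large} eigenvalues by $A/\sqrt D$, and $\mathrm{tr}(A(H)^k)$ also counts degenerate closed walks, which for odd $k$ need separate treatment. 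Worse, the bound you are targeting is far stronger than anything needed or established: in the critical range $A\approx n^{1/3}$, $D\approx n^{2/3}$ it saves a factor $n^{1/3}$ over the trivial count, it is exactly tight for an optimally pseudorandom $K_4$-free graph with those parameters, and it would give $\epsilon_k\approx\frac{1}{3(k-1)}$, essentially matching the upper bound (\ref{eqn:upper}). So this is not a ``remaining ingredient''; it is the theorem, and the proposal has a genuine gap here.

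The paper never proves a global cycle count of that strength; it only needs to beat the trivial count by a small \emph{fixed} power of $n$ (which is precisely why $\epsilon_k=\Theta(1/k)$), and it does so by a local win--win rather than a spectral estimate. Let $\triangle$ be the maximum number of $C_k$'s through a single vertex and $\delta=\triangle/d^{k-1}$. If $\delta<n^{-1/25}$, Spencer's bound applied to the hypergraph of cycle vertex sets already yields a $C_k$-free set of size $\Omega(n/(\delta^{1/(k-1)}d))\ge n^{1/3+\epsilon_k}$. Otherwise some $v_0$ lies on $\delta d^{k-1}$ cycles, and a dyadic pigeonhole along the cycle (Lemma~\ref{ckprop}) extracts sets $X,Y$ with $|X|,|Y|\ge\gamma d$ and $e(X,Y)\ge\gamma|X||Y|$, where $\gamma=\delta/(2\log_2 d)^k$; dependent random choice with $s=3$ then produces either an independent set of size $\ge\frac{1}{2}\gamma^4 d$ or an edge whose common neighborhood (independent, by $K_4$-freeness) has size $\ge\gamma^{5/3}d^{2/3}$, and both exceed $n^{1/3+\epsilon_k}$ in the surviving range of $d$. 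That lemma is the concrete mechanism behind your heuristic that ``dense clusters absorb independence number''; without it, or an actual proof of your counting bound, the argument does not close.
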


To see that this implies Theorem \ref{c4k4}, let $H$ be a $K_s$-free graph where $s \geq 5$. If $H$ has maximum degree $d$,
then by Tur\'{a}n's Theorem, $H$ has an independent set of size at least $n/(d + 1)$, and the neighborhood of a vertex of degree
$d$ induces a $K_{s - 1}$-free subgraph. By induction, setting $\alpha_k(4) = 1/3 + \epsilon_k$, for $s \geq 5$, there exists $\alpha = \alpha_k(s - 1) > 1/(s - 2)$
such that this $K_{s - 1}$-free subgraph has an $F$-free subgraph with $\Omega(d^{\alpha_k(s - 1)})$
vertices. Therefore we have an $F$-free subgraph of size at least
\[ \Omega(\max\Bigl\{d^{\alpha_k(s - 1)},\frac{n}{d + 1}\Bigr\}).\]
Setting
\[ \alpha_k(s) = 1 - \frac{1}{1 + \alpha_k(s - 1)},\]
since $\alpha_k(4) > 1/3$ for all $k \geq 3$, by induction we have 
\[ \alpha_k(s) > 1 - \frac{1}{1 + \frac{1}{s-2}} = \frac{1}{s-1}\]
as required. Moreover, if $\alpha_k(s-1) \ge 1/(s-2)+\epsilon$ where $\epsilon\le 1$, the calculation above yields
\begin{align*}
\alpha_k(s) \ge    1-\frac{s-2}{s-1+\epsilon(s-2)}
 &=\frac{1}{s-1}+\epsilon\left(\frac{(s-3)+\frac{1}{s-1}}{s-1+\epsilon(s-2)}\right) \\ 
&>\frac{1}{s-1}+\epsilon\left(\frac{s-3}{2(s-1)}\right)\\
& \ge \frac{1}{s-1}+\frac{\epsilon}{4} .
\end{align*}
 
With $\alpha_k(4) > 1/3 + 1/100k$, this gives 
$\alpha_k(s) =  1/(s - 1) + \Omega_s(1/k)$ as $k \rightarrow \infty$.

\bigskip
We will prove Theorem~\ref{c4} as follows: a given $K_4$-free graph $H$ either has few $k$-cycles going through every vertex or has a vertex that lies in many $k$-cycles. In the former case, we apply standard results about hypergraph independent sets (Lemma~\ref{spencer}) to obtain a large $C_k$-free subset. In the latter case, we show that $H$ contains a dense bipartite graph and then use the dependent random choice technique to extract from this a large independent set in one of the parts. These assertions are stated in the next three lemmas.

For sets $X,Y$ of vertices in a graph $G$, let $e(X,Y)$ denote the number of edges $\{x,y\} \in E(G)$
such that $x \in X$ and $y \in Y$.

\begin{lemma}\label{ckprop}
Let $G$ be a graph of maximum degree $d$, and let $\delta > 0$. Suppose the number of cycles of length $k$
containing a vertex $v_0 \in V(G)$ is at least $\delta d^{k-1}$. Then there exist sets $X,Y \subseteq V(G)$
such that $e(X,Y) \geq \delta |X||Y|/(2\log_2 d)^k$ and $|X|,|Y| \geq \delta d/(\log_2 d)^{k-3}$.
\end{lemma}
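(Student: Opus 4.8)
The plan is to prove the stronger statement that $G$ contains a bipartite subgraph whose two sides have size $\Theta(d)$ and whose edge density is bounded below by a positive constant $\varepsilon(k,\delta)$; the stated bound then follows once $d$ exceeds a threshold depending only on $k$ and $\delta$, the finitely many remaining values of $d$ being checked directly (there the statement is essentially vacuous). I first reformulate the hypothesis: deleting $v_0$ from a $k$-cycle through it leaves a path on $k-1$ vertices, of length $k-2$, both of whose endpoints lie in $N(v_0)$ and whose interior avoids $v_0$, and this is a bijection; hence there are at least $\delta d^{k-1}$ such paths. Put $S_0=T_0=N(v_0)$, $L_0=k-2$, $M_0=\delta d^{k-1}$, so that $|S_0|,|T_0|\le d$ and $M_0\ge\delta d^{L_0+1}$.

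The core is a length-halving recursion maintaining the invariant: there are at least $M_i$ paths of length $L_i$ from $S_i$ to $T_i$, with $|S_i|,|T_i|\le C_i d$ and $M_i\ge c_i d^{L_i+1}$. Given such a configuration with $L_i\ge 2$, split each path at its midpoint (a vertex if $L_i$ is even, an edge if $L_i$ is odd). For a vertex $z$ let $p_1(z)$, $p_2(z)$ count the length-$\lceil L_i/2\rceil$ paths from $S_i$ to $z$ and the length-$\lfloor L_i/2\rfloor$ paths from $z$ to $T_i$; each length-$L_i$ path is recovered from its two halves, so $\sum_z p_1(z)p_2(z)\ge M_i$. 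Since $\sum_z p_1(z)\le |S_i| d^{\lceil L_i/2\rceil}$ and likewise for $p_2$, after discarding those $z$ with $p_1(z)$ or $p_2(z)$ below a threshold $\tau_i=\Theta\!\big(M_i/(C_i d^{L_i/2+1})\big)$ --- which costs at most half of $\sum_z p_1(z)p_2(z)$ --- the surviving set $Z_i$ has size at most $\sum_z p_1(z)/\tau_i = O(C_i^2 d^{L_i+2}/M_i)=O(C_i^2 d/c_i)$. Retaining only the second halves of the surviving paths gives $\ge M_{i+1}\ge M_i/(2 d^{\lceil L_i/2\rceil})$ paths of length $L_{i+1}=\lfloor L_i/2\rfloor$ from $S_{i+1}=Z_i$ to $T_{i+1}=T_i$, with $M_{i+1}\ge c_{i+1} d^{L_{i+1}+1}$, $c_{i+1}=c_i/2$, $C_{i+1}=O(C_i^2/c_i)$ (the odd case is identical up to an extra factor $d$ from the degree bound at the midpoint edge). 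After $O(\log k)$ rounds $L$ drops to $1$; by then $c$ has shrunk only by a factor $k^{O(1)}$ and $C$ has grown to at most $(1/\delta)^{\mathrm{poly}(k)}$, and ``paths of length $1$ from $S$ to $T$'' are just a set of $\Omega(c_{\mathrm{final}} d^2)$ edges of $G$ between two vertex sets of size $O(C_{\mathrm{final}} d)$.

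Taking $X$, $Y$ to be the sets of endpoints used by these edges gives $e(X,Y)\ge \varepsilon_1(k,\delta)\,d^2$ and $\varepsilon_2(k,\delta)\,d\le |X|,|Y|\le \varepsilon_3(k,\delta)\,d$ (the lower bound since each vertex meets at most $d$ of the edges), hence $e(X,Y)\ge \varepsilon(k,\delta)\,|X|\,|Y|$ with $|X|,|Y|\ge\varepsilon_2(k,\delta)\,d$ --- and these imply $e(X,Y)\ge\delta|X||Y|/(2\log_2 d)^k$ and $|X|,|Y|\ge\delta d/(\log_2 d)^{k-3}$ once $d$ is large in terms of $k,\delta$. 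For $k=3$ the recursion is empty and one takes $X=Y=N(v_0)$ (the $\delta d^2$ ``paths'' being the edges inside $N(v_0)$); for $k=4$, resp.\ $k=5$, the midpoint is a vertex, resp.\ an edge, already incident to $N(v_0)$, so again no recursion is needed.

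The step I expect to be the main obstacle is controlling the growth of the sets $Z_i$: the estimate $|Z_i|=O(d)$ is valid only while the number of surviving paths remains a $(k,\delta)$-dependent constant fraction of the trivial maximum $d^{L_i+1}$, so one must track the constants $c_i$, $C_i$ through all $O(\log k)$ rounds and verify that the threshold can always be placed to discard at most half of the path-weight. A routine but necessary point is that splitting at midpoints produces ``paths'' that are concatenations of two shorter paths possibly sharing vertices; since only lower bounds on counts of genuine paths are ever used, this over-counting is harmless. Finally, if one prefers the literal $(\log_2 d)$-power form of the statement rather than absolute constants, it drops out of replacing the threshold step with a dyadic pigeonhole over the scales of $p_1(z)$ and $p_2(z)$.
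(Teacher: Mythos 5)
Your length-halving recursion is internally consistent as far as the path counts go, but the quantitative outcome does not prove the lemma as stated, and the escape hatch you offer for the remaining cases is not valid. Track your own constants: $c_i$ loses only a factor $2$ per round, but $C_i$ obeys $C_{i+1}=O(C_i^2/c_i)$ (or at best $O(C_i/c_i)$ if one exploits $|T_i|\le d$ throughout), so after the $\Theta(\log k)$ rounds you end with $C_{\mathrm{final}}=(1/\delta)^{\Omega(1)}$ and hence a density guarantee $\varepsilon(k,\delta)=\varepsilon_1/\varepsilon_3^2=\delta^{1+\Omega(1)}$, superlinear in $\delta$ (with exponent $\Theta(k)$ by your own accounting). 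Consequently the implication ``$\varepsilon(k,\delta)\ge\delta/(2\log_2 d)^k$ once $d$ is large in terms of $k,\delta$'' requires $\log_2 d\ge\delta^{-\Omega(1)}$. The lemma, however, is asserted for every $\delta>0$ with no such restriction, and in the paper it is invoked with $\delta$ as small as $n^{-1/25}$ while $d=n^{2/3\pm o(1)}$ --- exactly the regime $\delta\ll(\log_2 d)^{-1}$ that your argument does not reach. Your claim that the uncovered cases are ``essentially vacuous'' is false: once $\delta$ is allowed to vary there are not finitely many leftover values of $d$, and for small $\delta$ the hypothesis of $\delta d^{k-1}$ cycles is easily satisfiable while the conclusion remains a genuine assertion (for $k\ge4$ one cannot, say, fall back on $X=Y=N(v_0)$, since a graph may have many $k$-cycles through $v_0$ and no edges inside $N(v_0)$).

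The fix you mention in passing --- replacing the single threshold by a dyadic pigeonhole over the scales of $p_1(z)$ and $p_2(z)$ --- is not a cosmetic refinement but the essential point, and it is in effect what the paper does: each cycle is ordered as $\sigma_0,\dots,\sigma_{k-1}$ and one performs $k-3$ successive dyadic regularizations of the degrees from $X_i$ into $X_{i-1}'$, losing only a factor $\log_2 d$ in the cycle count per step; the resulting density is linear in $\delta$ (namely $\delta/(2^{k-1}(\log_2 d)^{k-3})$) and the set sizes are at least $\delta d/(\log_2 d)^{k-3}$. If you carry out the dyadic variant of your recursion you must also redo the bound on $|Z_i|$, since the surviving scale $a$ of $p_1$ is no longer pinned to $M_i/(C_i d^{L_i/2+1})$, and this changes the bookkeeping for $C_i$. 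As written, your proof establishes only a weakened lemma carrying an inadmissible lower bound on $\delta$ in terms of $d$.
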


\begin{proof}
Let $\mathcal{C}$ be the set of $k$-cycles containing $v_0$. For each $\sigma \in \mathcal{C}$, pick an ordering 
$(\sigma_0,\sigma_1,\dots,\sigma_{k-1},\sigma_0)$ of the vertices of $\sigma$, where $\{\sigma_i,\sigma_{i + 1}\} \in E(\sigma)$ with subscripts modulo $k$. Let $X_i = \{\sigma_i  : \sigma \in \mathcal{C}\}$ for $1 \leq i \leq k - 1$. Then for $2 \leq i \leq k - 2$ there exist sets $X_i' \subseteq X_i$
and $a_i \in \{1,2,\dots,d\}$ such that every vertex of $X_i'$ has at least $a_i/2$ and at most $a_i$ neighbors in $X_{i-1}'$, and the number of cycles $\sigma \in \mathcal{C}$ with $\sigma_i \in X_i'$ is at least $\delta d^{k - 1}/(\log_2 d)^{k-3}$. This can be done
iteratively, starting by splitting $X_2$ into sets $X_{2j}$ such that every vertex of $X_{2j}$ has at least $d/2^{j + 1}$ and at most $d/2^{j}$ neighbors
in $X_1$, for $0 \leq j \leq \log_2 d$, and considering an $X_2' = X_{2j}$ for which at least $|\mathcal{C}|/(\log_2 d)$ of the cycles use an edge between
$X_1$ and $X_{2}'$. Call this collection of cycles $\mathcal{C}_2$. Then repeat the argument for the pair $X_{2}'$ and $X_3$, with collection of cycles $\mathcal{C}_2$ 
so there exists $X_{3}' \subseteq X_3$ and  $\mathcal{C}_3 \subset \mathcal{C}_2$ with
$|\mathcal{C}_3| \ge |\mathcal{C}_2|/\log_2 d$.
We continue to obtain $X_i' \subseteq X_i$ for all $i \leq k - 2$ and set $\mathcal{C}':= \mathcal{C}_{k-2}$. Then 
\[ |\mathcal{C}'| \geq \frac{|\mathcal{C}|}{(\log_2 d)^{k-3}} \ge \frac{\delta d^{k-1}}{(\log_2 d)^{k-3}}\] 
and for every 
$\sigma \in \mathcal{C}'$ we have  $\sigma_i \in X_i'$ for $i \leq k - 2$. 
\medskip

Let $X = X_{k-2}'$ and $Y = X_{k-1}'$.
The number of cycles in $\mathcal{C}'$ containing an edge $\{x,y\}$ with $x \in X$ and $y \in Y$ is at most $a_2 \cdots a_{k-2}\le d^{k-3}$ as the maximum degree is $d$. Consequently,
$$|\mathcal{C}'| \le e(X, Y) \cdot a_2\cdots a_{k-2} \le e(X, Y) \cdot d^{k-3}$$
and
\[ d \cdot \min\{|X|,|Y|\} \geq e(X,Y) \geq \frac{|\mathcal{C}'|}{d^{k-3}} \geq \frac{\delta d^2}{(\log_2 d)^{k-3}}.\]
Therefore $\min\{|X|,|Y|\} \geq \delta d/(\log_2 d)^{k-3}$.
\medskip

Next we prove that $e(X,Y) \geq \delta |X||Y|/(2\log_2 d)^k$.
By construction, for $2 \leq i \leq k - 2$,
\[ \frac{a_i}{2} |X_i'| \leq e(X_i',X_{i-1}') \leq d|X_{i-1}'|\]
and therefore $a_i \leq 2d|X_{i-1}'|/|X_i'|$. Since $|X_1'|\le |N(v_0)| \le d$ and $|Y|\le |N(v_0)| \le  d$, 
\[ a_2 a_3 \cdots a_{k - 2} \leq 
\prod_{i=2}^{k-2} 2d\frac{|X_{i-1}'|}{|X_i'|} = (2d)^{k-3}\frac{|X_1'|}{|X_{k-2}'|}=
(2d)^{k-3}\frac{|X_1'|}{|X|} \le \frac{(2d)^{k-1}}{|X||Y|}.\]
Consequently, 
\[ e(X,Y) \geq \frac{|\mathcal{C}'|}{a_2 \dots a_{k-2}} \geq \frac{|\mathcal{C}'||X||Y|}{(2d)^{k-1}} \ge \frac{\delta}{2^{k-1}(\log_2 d)^{k-3}}|X||Y|\]
completing the proof.
\end{proof}

The following lemma is a standard consequence of the dependent random choice method and we omit the  proof.
\begin{lemma}\label{drc}
Let $\gamma \geq 0$, $s \geq 1$, and let $X$ and $Y$ be disjoint sets of vertices in a graph, such that $e(X,Y) \geq \gamma |X||Y|$. Then for any 
$s \geq 1$, there exists a set $Z \subseteq Y$ such that 
\[ |Z| \geq \frac{1}{2}\gamma^s |Y|\]
and every pair of vertices in $Z$ has at least $\gamma |X||Y|^{-1/s}$ neighbors in $X$. 
\end{lemma}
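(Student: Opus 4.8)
The plan is the standard dependent random choice argument. We may assume $\gamma>0$, since the conclusion is vacuous otherwise, and (as is usual for this method) that $s$ is a positive integer. First I would sample vertices $x_1,\dots,x_s\in X$ independently and uniformly at random and let $Z_0$ be their common neighbourhood in $Y$. A vertex $y\in Y$ with $d(y):=|N(y)\cap X|$ neighbours in $X$ lands in $Z_0$ with probability $(d(y)/|X|)^s$, so $\EE|Z_0|=\sum_{y\in Y}(d(y)/|X|)^s$; by convexity of $t\mapsto t^s$ and the hypothesis $\sum_{y\in Y}d(y)=e(X,Y)\ge\gamma|X||Y|$, this is at least $|Y|\bigl(e(X,Y)/(|X||Y|)\bigr)^s\ge\gamma^s|Y|$.

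Next I would discard the pairs of $Y$ having few common neighbours in $X$. Call a pair $\{y,y'\}\subseteq Y$ \emph{bad} if $|N(y)\cap N(y')\cap X|<\tau$, where the threshold $\tau:=\gamma|X||Y|^{-1/s}$ is chosen to make the two halves of the argument balance. A fixed bad pair is contained in $Z_0$ with probability $(|N(y)\cap N(y')\cap X|/|X|)^s<(\tau/|X|)^s=\gamma^s/|Y|$, so summing over the fewer than $|Y|^2/2$ pairs in $Y$, the expected number of bad pairs inside $Z_0$ is less than $\tfrac12\gamma^s|Y|$. Hence the expectation of $|Z_0|$ minus the number of bad pairs inside $Z_0$ exceeds $\tfrac12\gamma^s|Y|$, so we may fix an outcome of the sampling for which this difference is at least $\tfrac12\gamma^s|Y|$; deleting one vertex from each bad pair contained in $Z_0$ then gives a set $Z\subseteq Y$ with $|Z|\ge\tfrac12\gamma^s|Y|$ in which every pair has at least $\tau=\gamma|X||Y|^{-1/s}$ common neighbours in $X$, as desired.

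There is no genuine obstacle here — the argument is entirely routine — so the step that requires the most care is really just the calibration of $\tau$: it must be small enough that the expected number of bad pairs inside $Z_0$ stays below $\tfrac12\,\EE|Z_0|$ (so that a large enough surviving set exists), yet large enough to be useful downstream, and $\gamma|X||Y|^{-1/s}$ is exactly where these competing requirements meet. This is the reason the exponent $-1/s$ appears in the statement.
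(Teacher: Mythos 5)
Your argument is correct and is precisely the standard dependent random choice computation that the paper itself invokes without proof (the authors write that the lemma ``is a standard consequence of the dependent random choice method and we omit the proof''). The only caveat is your explicitly flagged restriction to integer $s$, which is harmless here since the paper applies the lemma only with $s=3$.
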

Finally we need the following standard result about independent sets in hypergraphs first proved by Spencer~\cite{Spencer}.
\begin{lemma} \label{spencer}
For every $k \ge 2$, every $n$-vertex $k$-uniform hypergraph with average degree $d>0$ has an independent set of size at least $(1-1/k) n/ d^{1/(k-1)}$.
\end{lemma}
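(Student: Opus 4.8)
The plan is to prove this via the standard deletion/probabilistic argument (sometimes attributed to Spencer, with roots in the Turán-type bounds for hypergraph independence). First I would fix a $k$-uniform hypergraph $\mathcal{H}$ on $n$ vertices with $m$ edges, so that the average degree is $d = km/n$. We want to find an independent set of size at least $(1-1/k)n/d^{1/(k-1)}$. If $d \le 1$ the claim is trivial (the whole vertex set, or an easy argument, beats the bound), so assume $d > 1$.

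The key step is to sample a random subset: include each vertex of $V(\mathcal{H})$ independently with probability $p$, where $p \in (0,1]$ will be chosen later. Let $S$ be the sampled set and let $Y = |S|$, $Z$ the number of edges of $\mathcal{H}$ entirely contained in $S$. Then $\EE[Y] = pn$ and $\EE[Z] = p^k m$, since each edge has $k$ vertices. Now delete one vertex from each edge counted by $Z$; the resulting set $S'$ is independent and has size at least $Y - Z$, so $\EE[|S'|] \ge pn - p^k m$. Substituting $m = dn/k$ gives $\EE[|S'|] \ge pn - p^k dn/k$. I would then optimize over $p$: the choice $p = d^{-1/(k-1)}$ (which lies in $(0,1]$ because $d \ge 1$) yields $\EE[|S'|] \ge d^{-1/(k-1)} n - d^{-k/(k-1)} \cdot dn/k = d^{-1/(k-1)} n (1 - 1/k)$. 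Since $|S'|$ is at least its expectation for some outcome, such an independent set exists, completing the proof.

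The only mild subtlety to check carefully is the edge case and the monotonicity: if $d < 1$ the chosen $p$ would exceed $1$, so there one simply takes $p = 1$, deletes a vertex from each edge, and gets an independent set of size $\ge n - m = n - dn/k = n(1 - d/k) \ge n(1-1/k) \ge n(1-1/k)/d^{1/(k-1)}$ since $d \le 1$. I expect essentially no obstacle here — this is a textbook application of the deletion method — so the writeup is just a matter of stating the sampling, the expectation computation, and the optimization cleanly; the paper is justified in citing it and omitting details, but the above is the argument in full.
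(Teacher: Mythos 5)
The paper does not prove this lemma; it simply cites Spencer, and your argument is exactly the standard deletion-method proof that is being invoked. Your main computation is correct: with $m=dn/k$ edges, sampling each vertex with probability $p=d^{-1/(k-1)}$ and deleting one vertex per surviving edge gives an independent set of expected size at least $pn-p^km=(1-1/k)\,n\,d^{-1/(k-1)}$, which is all that is needed (and all that the paper uses, since in both applications the relevant average degree is large).

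There is, however, a genuine error in your treatment of the edge case $d<1$. You claim $n(1-1/k)\ge n(1-1/k)/d^{1/(k-1)}$, but for $d<1$ we have $d^{1/(k-1)}<1$, so dividing by it \emph{increases} the right-hand side; the inequality runs the wrong way. Moreover this case cannot be repaired, because the lemma as literally stated is false for small $d$: take $k=2$, $n=10$ and a single edge, so $d=1/5$ and the claimed bound is $(1/2)\cdot 10/(1/5)=25>n$. The correct hypothesis, as in Spencer's original formulation, is $d\ge 1$ (equivalently $m\ge n/k$), and under that hypothesis your proof is complete; for $d<1$ one should instead state the weaker (and correct) conclusion $\alpha(\mathcal H)\ge n-m\ge(1-1/k)n$. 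This is really an imprecision in the paper's statement rather than a flaw in your method, but your write-up should not paper over it with a reversed inequality.
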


We now have the necessary ingredients to prove Theorem~\ref{c4}.
\bigskip

{\bf Proof of Theorem \ref{c4}.}  For $k \geq 3$, let 
\[ \epsilon_k = \frac{1}{100(k-1)}.\]
 Let $H$ be an $n$-vertex $K_4$-free graph with maximum degree $d$.  We will find a $C_k$-free subset of vertices in $H$ of size at least $n^{1/3+\epsilon_k}$. Suppose that $\triangle$ is the maximum number of copies of $C_k$ that a vertex is in. Define 
$$\delta:=\frac{\triangle}{ d^{k-1}}.$$

We now obtain two different bounds on the maximum $C_k$-free set.
\medskip

{\bf Bound 1.}  Let $\mathcal{H}$ be the $k$-uniform hypergraph with $V(\mathcal{H}) = V(H)$ and $E(\mathcal{H}) = \{V(C_k) : C_k\subseteq H\}$. Then $\mathcal{H}$ has maximum degree (and hence average degree)  at most $\triangle$ and Lemma~\ref{spencer} implies that $H$ has an independent set of size at least
$$\Omega\left( 
\frac{n}
{\triangle^{
\frac{1}{k-1}
}}
\right) =  \Omega\left( \frac{n}{\delta^{\frac{1}{k-1}} d}
\right).$$

{\bf Bound 2.} Let $v_0 \in V(H)$ lie in $\triangle=\delta d^{k - 1}$ copies of $C_k$.  
By Lemma~\ref{ckprop}, there exist sets $X,Y \subseteq V(H)$ such that 
\[ e(X,Y) \geq \frac{\delta}{(2\log_2 d)^k} |X||Y| =: \gamma |X||Y|,\]
where $|X| \geq |Y| \geq \gamma d$. 
By Lemma~\ref{drc} applied with $s=3$, there exists $Z \subseteq Y$ with
\[ |Z| \geq \frac{1}{2}\gamma^3 |Y|\]
such that every pair of vertices in $Z$ has at least $\gamma |X| |Y|^{-1/3} \geq \gamma |Y|^{2/3}$ common neighbors in $X$. If $Z$ is not an independent set in $H$, then there exists $\{x,y\} \in E(H)$ with $x,y \in Z$. Since $H$ is $K_4$-free, $N(x) \cap N(y)$ is an independent set in $H$ of size at least  $\gamma |Y|^{2/3} \geq \gamma^{5/3} d^{2/3}$. Otherwise, $Z$ is an independent set in $H$ of size at least $\frac{1}{2}\gamma^3 |Y| \geq \frac{1}{2}\gamma^{4}d$. In particular, $H$ has a 
$C_k$-free induced subgraph of size at least 
$$h(d,\gamma) =  \min \left\{\gamma^{5/3} d^{2/3}, \frac{1}{2}\gamma^{4} d
\right\}.$$

It is also the case that $G$ always contains an independent set with at least $n/(d + 1)$ vertices, by Tur\'{a}n's Theorem.
If $d \leq n^{2/3 - \epsilon_k}$, this gives an independent set of size $n^{1/3 + \epsilon_k}$ in $G$. If $d \geq n^{2/3 + 2\epsilon_k}$, then since the neighborhood of a vertex of degree $d$ induces a triangle-free graph, this neighborhood contains an independent set of size at least $d^{1/2} \ge n^{1/3 + \epsilon_k}$ in $G$. Therefore we assume $n^{2/3 - \epsilon_k} \leq d \leq n^{2/3 + 2\epsilon_k}$.
In that case, by Bounds 1 and 2,  we obtain a $C_k$-free set of size at least
$$\Omega\left(
\max\left\{\frac{n}{\delta^{\frac{1}{k-1}} d} , h(d, \gamma)\right\}\right).$$
If $\delta < n^{-1/25}$, then Bound 1 is at least 
$$\Omega\left( \frac{n}{\delta^{\frac{1}{k-1}} d}
\right) = \Omega\left( n^{\frac{1}{3} + \frac{1}{25(k-1)} - 2\epsilon_k}
\right) = \Omega(n^{\frac{1}{3} + \epsilon_k})$$
as $\epsilon_k < 1/75(k-1)$.
So we may assume that $\delta \ge n^{-1/25}$, and, as $n$ is sufficiently large, we may assume that $\gamma =\delta/(2\log_2 d)^k > n^{-1/24}$. In  this case, $\epsilon_k<1/100$ and $d>n^{2/3-\epsilon_k}$ yield
$$\gamma^{\frac{5}{3}}d^{\frac{2}{3}} >n^{\frac{-5}{72} + \frac{4}{9} - \frac{2\epsilon_k}{3}} > n^{\frac{1}{3} + \epsilon_k} \qquad \hbox{ and } \qquad \gamma^4d> n^{-\frac{1}{6}+\frac{2}{3} -\epsilon_k}>
2 n^{\frac{1}{3} + \epsilon_k}$$
and therefore $h(d,\gamma)>n^{1/3+\epsilon_k}$, completing the proof. \qed

\section{Proof of Theorem~\ref{general}}

A sunflower is a collection of sets every pair of which have the same intersection, called the core. We need the well-known Erdos-Rado sunflower lemma in the form below. 

\begin{lemma} \label{ersun}
Fix $t,m>0$. Every $t$-uniform hypergraph with more than $t!(m-1)^t$ edges has a sunflower of size $m$.
 \end{lemma}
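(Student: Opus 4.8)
The plan is to prove this by induction on $t$, the uniformity. The base case $t=1$ is immediate: a $1$-uniform hypergraph with more than $m-1$ edges consists of more than $m-1$ singletons, and any $m$ of them form a sunflower with empty core (every pair intersects in $\emptyset$). For the inductive step, suppose the claim holds for $(t-1)$-uniform hypergraphs and let $\mathcal{F}$ be a $t$-uniform hypergraph with $|\mathcal{F}| > t!(m-1)^t$.

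First I would take a maximal collection $\mathcal{A} = \{A_1,\dots,A_s\}$ of pairwise disjoint edges of $\mathcal{F}$. If $s \geq m$, then $\mathcal{A}$ itself is a sunflower of size $m$ (again with empty core) and we are done. So assume $s \leq m-1$, and let $Y = A_1 \cup \cdots \cup A_s$, so $|Y| \leq t(m-1)$. By maximality, every edge of $\mathcal{F}$ meets $Y$, so by pigeonhole some fixed vertex $y \in Y$ lies in at least $|\mathcal{F}|/|Y| > t!(m-1)^t/(t(m-1)) = (t-1)!(m-1)^{t-1}$ edges. Consider the link hypergraph $\mathcal{F}_y = \{ e \setminus \{y\} : e \in \mathcal{F},\ y \in e \}$, which is $(t-1)$-uniform with more than $(t-1)!(m-1)^{t-1}$ edges. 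By the induction hypothesis, $\mathcal{F}_y$ contains a sunflower $\{P_1,\dots,P_m\}$ with some core $C$; then $\{P_1 \cup \{y\},\dots,P_m \cup \{y\}\}$ is a sunflower of size $m$ in $\mathcal{F}$ with core $C \cup \{y\}$.

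There is no real obstacle here — the argument is entirely classical and the only thing to be careful about is bookkeeping the inequality $|\mathcal{F}|/|Y| > (t-1)!(m-1)^{t-1}$, which follows since $|Y| \leq t(m-1)$ and $t!(m-1)^t / (t(m-1)) = (t-1)!(m-1)^{t-1}$. Since this is a well-known result, one could alternatively just cite it; if a self-contained proof is wanted, the induction above is the cleanest route.
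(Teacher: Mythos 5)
Your proof is correct: it is the classical Erd\H{o}s--Rado induction (maximal matching of disjoint edges, pigeonhole on the union, pass to the link of a popular vertex), and the bookkeeping $|\mathcal{F}|/|Y| > t!(m-1)^t/(t(m-1)) = (t-1)!(m-1)^{t-1}$ is right. The paper itself states Lemma~\ref{ersun} as a well-known result and gives no proof, so there is nothing to compare against; your self-contained argument would be a perfectly acceptable substitute for the citation.
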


{\bf Proof of Theorem~\ref{general}.} Let $|V(G)| = k$. We may assume that $G$ is not acyclic, since otherwise $G$ would be contained in a blowup of $F$. Consider an $n$ by $n$ bipartite graph $H$ without cycles of length at most $2k$ and where every
vertex has degree $d = n^{\frac{1}{3k}}$. Such graphs exist, for example the bipartite Ramanujan graphs of
Lubotzky, Phillips and Sarnak~\cite{LPS}, or even a random $d$-regular graph (if we are not fussy about the constant in the exponent). We now employ the methods
of~\cite{MV,CMMV}. Let $H'$ be the restriction
of the square of $H$ to one part of $H$, so that $H'$ has $n$ vertices, and is a union of
$n$ edge-disjoint cliques $K^1,K^2,\dots,K^n$ of order $d$. Since $G$ is not acyclic,   every copy of $G$ in $H'$ is contained in one of those cliques.
In each of the cliques, take a random coloring with elements of $V(F)$, and put an edge
between any two color classes corresponding to an edge of $F$. Since $F$ is $\mbox{hom}(G)$-free,
this random graph $H^*$ is $G$-free. We claim (similarly to the proof of Theorem \ref{triangles}), that every set of at least $(2n|V(F)|\log |V(F)|)/d$ vertices of $H^*$
induces a copy of $F$. The probability that such a set $X$ does not induce a copy of $F$ is at most 
\[ \prod_{i = 1}^n |V(F)| \cdot \Bigl(1 - \frac{1}{|V(F)|}\Bigr)^{|X \cap V(K^i)|}.\]
Now we use
\[ \sum_{i = 1}^n |X \cap V(K_i)| = d|X|\]
and therefore the expected number of such $X$ is at most
\[ {n \choose |X|} \cdot |V(F)|^n \Bigl(1 - \frac{1}{|V(F)|}\Bigr)^{d|X|} < e^{|X|\log n - d|X|/|V(F)| + n\log |V(F)|}.\]
This is vanishing since $d|X|/|V(F)| > 2n\log |V(F)|$. Therefore
\[ f_{F,G}(n) = O(n/d) = O(n^{1-\frac{1}{3k}}) = O(n^{1 - \frac{1}{3|V(G)|}})\]
and we may take $\varepsilon_G = 1/3|V(G)|$ in Theorem \ref{general}.

\bigskip

We now prove  the second statement of the theorem.  Let $r:= |V(G)|-1$. If $G$ is acyclic, then any $n$-vertex $G$-free graph has an independent set $I$ of size linear in $n$, and $I$ is certainly $F$-free for any nonempty $F$ so we are done.
If $G$ is not 2-connected, then let $F=K_r$ so that $F$ is clearly $G$-free. Suppose that $H$ is an $n$-vertex  $G$-free graph. Then no two  $r$-cliques  in $H$ have a point in common, for otherwise the subgraph of $H$ induced by their union contains $G$. Indeed, we can pick some vertex in the intersection of the two cliques to be a cut vertex of $G$, and then easily embed $G$ in the union of the two cliques (the embedding is even easier if $G$ is not connected). Consequently, the $r$-cliques in $H$ are pairwise vertex disjoint. Then $H$ has a $K_r$-free induced subgraph of size at least $(1-1/r)n$ and we are done.

We may henceforth assume that $G$ is 2-connected.   Since $G$ is not a clique, let $v,w$ be nonadjacent vertices in $G$.
Let $G^+$ be the graph obtained from $G$ by adding all edges that are not already in $G$ between $\{v,w\}$ and $N_G(v) \cup N_G(w)$.  So $G^+ \supset G$, and $v$ and $w$ are clones in $G^+$. Let $G^{*}  =G^+ -\{w\}$  and 
let $G^{**} =G^+ -\{v,w\} = G^*-\{v\}$. So $G^*$ has $r$ vertices and  $G^{**}$ has $r-1 $ vertices.

Assume that $t$ is sufficiently large in terms of $r$ and set $\delta = 1/5r^2$. Apply Proposition~\ref{random} to obtain a $t$-vertex $r$-uniform hypergraph $F^*$ with girth larger than $r + 1$, and the property that for
every $s$-set $S$ with $t^{1 - \delta}\le s \le t - 1$, the number of edges in $F^*$ with exactly $r-1$ vertices in $S$ is at least 
$$\frac{1}{10} {s \choose r-1} (t-s) t^{1-r+\frac{1}{2r}}>
 {s \choose r-1} (t-s) t^{1-r+\frac{1}{3r}}=: q_s.$$

 Inside each hyperedge $e$ of $F^*$, place randomly 
a copy of  $G^{*}$. More precisely, among all $r!$ ways to map the vertices of $G^{*}$ to $e$, we pick one with probability $1/r!$. Let $F$ be the resulting graph with $V(F)=V(F^*)$ and $E(F)$ comprises the graph edges in all copies of $G^*$ that lie in edges of $F^*$. 

As $G$ has $r+1$ vertices, and $F^*$ is $r$-uniform, there is no copy of $G$ in $F$ that lies entirely within an edge of $F^*$. If a copy of $G$ in $F$ has two vertices $x,y$ that do not lie in the same edge of $F^*$, then, since $G$ is 2-connected, there is a cycle in $G$ containing $x$ and $y$ and this cycle yields a hypergraph cycle in $F^*$ of length at most $r+1$ which does not exist by construction. We conclude that $F$ is $G$-free.

Furthermore, we claim that for any $s$-set $S$ in $F$, with
$t^{1 - \delta}\le s \le t - 1$,
there exists an edge 
$e$ of $F^*$ with $r-1$ vertices in $S$ and one vertex in $V(F^*) - S$ such that
\begin{equation} \label{specialprop}
\hbox{ \parbox{5in}{\sl \begin{center}
 the copy of $G^{*}$ placed inside $e$ 
induces a copy of $G^{**}$ within $e \cap S$.
 \end{center}}}\end{equation}
 Indeed, (\ref{specialprop}) follows from the following argument. For each  of the $q_s$ edges $e$ of $F^*$ with exactly one vertex outside $S$, the probability that $e$ fails  (\ref{specialprop}) is at most $1-1/r!$. Since any two such edges $e,e'$ share at most one vertex by the girth property of $F^*$,  the probability that all of these $q_s$ edges   $e$ fail (\ref{specialprop}) is at most $(1-1/r!)^{q_s}$. Consequently, the probability that there exists an $s$-set for which there is no $e$ satisfying 
 (\ref{specialprop}) is at most
$$\sum_{s=t^{1-\delta}}^{t-1} {t \choose s} e^{-q_s/r!}
=\sum_{s=t^{1-\delta}}^{t-1} {t \choose t-s} e^{-q_s/r!}
< t{t\choose t-s}e^{-q_s/r!}<e^{\log t +(t-s)\log t - q_s/r!}<1.$$
The final inequality holds as $\delta=1/5r^2$ implies
$$1-r +\frac{1}{3r} +(1-\delta)(r-1) >0.$$
Hence we may assume that for all $s$-sets $S$ with $t^{1 - \delta}\le s \le t - 1$ there exists 
an edge 
$e$ of $F^*$ with $r-1$ vertices in $S$ and one vertex in $V(F^*) - S$
which satisfies (\ref{specialprop}). 

Now let $H$ be any $G$-free $n$-vertex graph. We are to find an $F$-free set of size $\Omega(n^{1 - \varepsilon})$. Let $R = r! + 1$ and 
\[ T = t! \left(R{t - 1 \choose r-1} - 1 \right)^{t}.\]
Set $b=t^{1-\delta}$. We claim the number of copies of $F$ in $H$ is at most
\begin{equation} \label{Tb}  T{n \choose b}.\end{equation}

If (\ref{Tb}) holds, then  there are at most  $O(n^b)$ copies of $F$ in $H$ and we finish the proof as follows. Consider the $t$-uniform hypergraph $\mathcal{H}$ with $V(\mathcal{H}) = V(H)$ and $E(\mathcal{H}) = \{V(F) : F \subseteq H\}$. The average degree of $\mathcal{H}$ is $O(n^{b-1})$.
By Lemma~\ref{spencer}, $\mathcal{H}$ contains an independent set $I$ of size $\Omega(n^{1 - (b-1)/(t-1)})$.
Since $(b-1)/(t-1)<t^{1-\delta}/(t-1)< \varepsilon$ for large $t$, we conclude that $I$ is an $F$-free set of size at least $\Omega(n^{1 - \varepsilon})$.

We now prove (\ref{Tb}). Assume to the contrary. Then to each copy of $F$ we may associate any $b$-subset of its vertices. By pigeonhole, there exists a set $C$ of $b$ vertices in $H$ and at least $T$ copies of $F$, say $F_1,F_2,\dots,F_T$ for which $V(F_i) \cap V(F_j) \supseteq C$. Amongst these sets of size $t$, Lemma~\ref{ersun} gives a sunflower of size $R{t - 1 \choose r-1}$ with core $S \supseteq C$. As $t^{1-\delta}\le |S| \le t-1$, by (\ref{specialprop}), for each of these $R{t - 1 \choose r-1}$ copies $A$ of $F$, there is a vertex $v_A$ outside $S$ that forms an edge $e_A$ in $A$ with $r-1$ vertices in $S$ and $v_A$ plays the role of  vertex $v$ in the copy of $G^*$ within $e_A$ (in other words, $e'_A=e_A-\{v_A\}$ induces a copy of $G^{**}$). 
By pigeonhole,  there exists a set $e' \subseteq S$ of size $r-1$
and vertices $v_1,v_2, \ldots, v_R \not \in S$   such that $e_i=e' \cup \{v_i\}$ is an edge  of $F^*$ for all $i\in [R]$ and $v_i$ plays the role of $v$ in the copy of $G^{*}$ in $e_i$ (in other words, $e'$ induces a copy of $G^{**}$). Since $R > r!$, we can find vertices, say $v_1,v_2$, such that the copies of $G^{**}$ within $e'$ for both $v_1$ and $v_2$  are identical. This copy of $G^{**}$ together with $v_1$ and $v_2$ is a copy of $G^+$. We conclude $H \supseteq G^+ \supseteq G$, a contradiction. 
 \qed

\section{Appendix}

\begin{aproposition}\label{efr-theorem} {\bf (Erd\H{o}s-Frankl-R\"{o}dl)}
For any $N, R \geq 3$ such that $N \geq R \geq \log N$, there exists a linear triangle-free $N$-vertex $R$-uniform hypergraph $H$ with
\[ |E(H)| \geq \frac{N^2}{R^{8\sqrt{\log_R N}}}.\]
\end{aproposition}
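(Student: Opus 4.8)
The plan is to realize $H$ as a ``transversal arithmetic progression'' hypergraph, to reduce both linearity and triangle-freeness to a single mild additive condition on a set $B$ of common differences, and then to construct $B$ by a Behrend-type digit argument in which the base is optimized against the coefficient bound $R$ --- this optimization is exactly what improves the Erd\H{o}s--Frankl--R\"odl exponent. Concretely, fix a prime $m$ with $N/(2R)\le m\le N/R$ (Bertrand), take the vertex set to be $\mathbb{Z}_m\times[R]$ padded with $N-mR$ isolated vertices, and for a set $B\subseteq\{1,\dots,\lfloor m/4R\rfloor\}$ specified below let the edges be the transversals $e_{x,b}=\{\,(x+(i-1)b\bmod m,\ i):i\in[R]\,\}$ for $x\in\mathbb{Z}_m$ and $b\in B$. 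These are $R$-uniform, the pair $(x,b)$ is recoverable from $e_{x,b}$ (so $|E(H)|=m|B|$), and isolated vertices affect neither property. Linearity is immediate: two edges with the same slope $b$ are vertex-disjoint, while two edges with slopes $b\neq b'$ share at most the one vertex in the group $i$ with $(i-1)(b-b')\equiv x'-x\pmod m$, unique since $m$ is prime.

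For triangle-freeness, let $e_1,e_2,e_3$ be distinct edges with $|e_i\cap e_j|=1$ for all pairs. Equal-slope edges are disjoint, so $b_1,b_2,b_3$ are distinct. If two of the three ``meeting groups'' $\alpha,\beta,\gamma\in\{0,\dots,R-1\}$ coincide, then all three edges pass through one common vertex and we are done; so assume $\alpha,\beta,\gamma$ are distinct. Adding the three intersection identities gives $c_1b_1+c_2b_2+c_3b_3\equiv0\pmod m$ with $c_1=\alpha-\gamma$, $c_2=\beta-\alpha$, $c_3=\gamma-\beta$, hence $c_1+c_2+c_3=0$ and $1\le|c_i|\le R-1$. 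Since each $b_i\le m/4R$, this integer has absolute value less than $m$, so $c_1b_1+c_2b_2+c_3b_3=0$ exactly. Thus it suffices to choose $B$ avoiding all solutions $c_1b_1+c_2b_2+c_3b_3=0$ with $b_1,b_2,b_3\in B$ not all equal, over the finite family of coefficient vectors satisfying $c_1+c_2+c_3=0$ and $1\le|c_i|\le R-1$.

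Such a $B$ comes from the digit-sphere construction: choose a base $D$ and $d\approx\log_D(m/4R)$ digits, and let $B$ be the integers whose $d$ base-$D$ digits all lie in $[0,\lfloor D/6R\rfloor)$ and whose digit vector lies on one fixed origin-centred sphere. The box restriction makes the formation of $c_1b_1+c_2b_2+c_3b_3$ carry-free, so the equation descends coordinatewise to the digit vectors; writing $c_3=-(c_1+c_2)$ displays the third digit vector as an affine combination of the first two, hence as a point of the sphere lying on the line through them --- forcing two, and then all three, of the digit vectors to agree, contrary to ``not all equal''. Pigeonholing over the spheres partitioning the box yields $|B|\ge m/\big(R^{O(1)}(6R)^dD^2d\big)$, and balancing $D^d\approx m/4R$ against this loss gives $\log D=\Theta(\sqrt{\log m\cdot\log R})$ and $|B|\ge m/\exp(O(\sqrt{\log m\log R}))$. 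Hence $|E(H)|=m|B|\ge m^2/\exp(O(\sqrt{\log m\log R}))\ge N^2/\big(4R^2\exp(O(\sqrt{\log N\log R}))\big)=N^2/R^{O(\sqrt{\log_R N})}$; and in the only non-trivial regime, where $N^2/R^{8\sqrt{\log_R N}}\ge1$ (which forces $\log_R N$ to be large, and together with $R\ge\log N$ renders $4R^2$ and the remaining lower-order factors negligible in the exponent), the constant can be taken to be $8$.

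The crux I anticipate is the quantitative Behrend step with the right $R$-dependence: one must let the base grow, $D\approx\exp(\sqrt{\log m\log R})$, so that a single carry-free digit window of width $\sim D/R$ still supports a near-full-density sphere slice, thereby paying only $\exp(O(\sqrt{\log m\log R}))$ rather than the $\exp(O(\log R\cdot\sqrt{\log m}))$ that a bounded base would cost --- precisely the improvement over~\cite{EFR}. Tracking how the factors $6R$, $D$, $d$, and the approximation $m\approx N/R$ combine, and pinning down where $R\ge\log N$ enters, is the portion that needs care.
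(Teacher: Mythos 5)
Your construction is correct and is essentially the paper's: both build a linear arithmetic-progression hypergraph whose set of common differences is a Behrend sphere, derive triangle-freeness from the fact that a sphere contains no three collinear points (after extracting the identity $c_1b_1+c_2b_2+c_3b_3=0$ with $c_1+c_2+c_3=0$ and $1\le|c_i|\le R-1$), and obtain the improved exponent by optimizing the base/radius to $\exp(\Theta(\sqrt{\log N\log R}))$. The only differences are presentational --- you work in $\mathbb{Z}_m\times[R]$ with a base-$D$ digit encoding and pigeonhole over spheres, while the paper works directly with lattice points on a sphere in $\mathbb{Z}^d$ (counted via Lagrange's four-square theorem), so no carry analysis is needed --- and your deferred constant-tracking does close in the only nontrivial regime $N>R^{16}$.
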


\begin{proof} The construction is based on the construction of Behrend~\cite{Behrend} of a dense subset of $\{1,2,\dots,n\}$ with no three-term arithmetic
progression. For completeness, we describe this construction here, which is slightly 
better than the construction of Erd\H{o}s, Frankl and R\"{o}dl~\cite{EFR}. Let $A$ be the set of positive integer points on the sphere of radius $r$ in $\mathbb R^d$.
For any choice of positive integers $x_1,x_2,\dots,x_{d-4} \leq r/\sqrt{d}$, there exist positive integers $x_{d-3},x_{d-2},x_{d-1},x_d$ such that
$x_1^2 + x_2^2 + \dots + x_d^2=r^2$ by Lagrange's four squares theorem. Therefore
\[ |A| \geq \Bigl(\frac{r}{\sqrt{d}}\Bigr)^{d-4}.\]
Let $X_i = [ir]^d$. Then define an $R$-uniform $R$-partite hypergraph $H$ where $V(H)$ consists of $X_1 \cup X_2 \cup \dots \cup X_R$ and let $E(H) = \{x,x+a,x+2a,\dots,x+(R - 1)a\}$ where
$a \in A$ and $x \in X_1$. Then
\[ |V(H)| = N  \leq R^{d + 1}r^d \quad \quad \mbox{ and } \quad \quad |E(H)| = |A||X_1| \ge  \frac{r^{2d-4}}{\sqrt{d}^{d - 4}}.\]
Put $d = \lfloor \sqrt{\log_R N}\rfloor < \log N \leq R$ and $r = R^d $. Then $r^4\le R^{4d}$ and $d^d< R^d$ and hence 
\[ |E(H)| \geq
\frac{N^2}{R^{2d+2}r^4d^{\frac{d-4}{2}}}\ge \frac{N^2}{R^{8d}}>
\frac{N^2}{R^{8\sqrt{\log_R N}}}.\]
This establishes (i). If $e = \{x,x+a,x+2a,\dots,x+(R - 1)a\}$ and $f = \{y,y+b,y+2b,\dots,y+(R - 1)b\}$ intersect in two
vertices of $H$, say $x + ia = y + ib$ and $x + ja = y + jb$, then $x = y$ and $a = b$, establishing (ii). If $e,f$ and
$g = \{z,z+c,z+2c,\dots,z+(R - 1)c\}$ have $|e \cap f| = |f \cap g| = |g \cap e| = 1$, then we may assume
$x + ia = y + ib$ and $y + jb = z + jc$ and $z + kc = x + ka$ for some distinct $i,j,k \in \{0,1,2,\dots,R - 1\}$ and
$a,b,c \in A$. This implies $i(b - a) + j(c - b) + k(a - c) = 0$ which means $(k - i)a + (i - j)b + (j - k)c = 0$.
Since the sphere is strictly convex, $a,b, c$ cannot all lie in a line, and hence we conclude two of $i,j,k$ are identical, a contradiction. This proves (iii).
\end{proof}

For the next proposition, we need some definitions. A cycle of length two in a hypergraph is a set of two edges that share at least two vertices. A cycle of length $\ell>2$ is a collection of $\ell$ distinct vertices $v_1, v_2, \ldots, v_{\ell}$ and $\ell$ distinct edges $e_1, \ldots, e_{\ell}$ where $e_i \cap e_{i+1} = \{v_{i+1}\}$ (indices modulo $\ell$) and $e_i \cap e_j=\emptyset$ otherwise. So an $\ell$-cycle in an $r$-uniform hypergraph ($\ell>2$) has $\ell$ edges and $\ell(r-1)$ vertices (these are often called loose cycles). Say that a hypergraph $H$ has girth  $g$ if the length of the shortest cycle in $H$ is $g$.

\begin{aproposition}\label{random} 
Fix  $r \ge 2$ and $\delta =1/5r^2$. For $t$ sufficiently large, there exists a $t$-vertex $r$-uniform hypergraph $F^*$ with girth at least $r+2$ such that for every $s$-subset $S$ with $t^{1-\delta} < s < t$,  the number of edges with exactly one vertex outside $S$ is at least 
\begin{equation} \label{Sprop}\frac{1}{10} {s \choose r-1} (t-s) t^{1-r+\frac{1}{2r}}.\end{equation}
\end{aproposition}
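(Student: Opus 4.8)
The plan is to take $F^*$ to be a random $r$-uniform hypergraph on $[t]$ where each $r$-set is included independently with probability $p = t^{-r+1+1/(2r)}$, and then delete one edge from each short cycle. First I would estimate the expected number of edges: it is $p\binom{t}{r} = \Theta(t^{1+1/(2r)})$, which by a standard concentration argument (Chernoff, since this is a sum of independent indicators) is concentrated around its mean, so with high probability $F^*$ has at least, say, $\tfrac12 p\binom{t}{r}$ edges. Next I would bound the expected number of cycles of each length $\ell$ with $2 \le \ell \le r+1$ that have to be destroyed. A cycle of length $\ell$ (in the loose sense defined above, plus the degenerate length-two cycles, i.e.\ pairs of edges sharing $\ge 2$ vertices) on $r$-sets spans roughly $\ell(r-1)$ vertices and uses $\ell$ edges, so the expected count is at most something like $t^{\ell(r-1)} p^\ell = (t^{r-1}p)^\ell = t^{\ell/(2r)}$ for loose cycles, and one checks the degenerate short cycles contribute a comparable or smaller term. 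The key point is that $\ell/(2r) < 1/2 \le 1 + 1/(2r) - \text{(a constant fraction)}$ for $\ell \le r+1$ when $r$ is fixed and $t$ is large, so the number of edges deleted in cleaning up is $o(t^{1+1/(2r)})$ — negligible compared to the edge count. (For the degenerate two-cycles one has to be a touch careful, but a pair of $r$-sets sharing exactly $j \ge 2$ vertices has expected count $\binom{t}{r}\binom{t-r}{r-j}\binom{r}{j}p^2 \lesssim t^{2r-j}p^2 = t^{-j + 2/(2r)} \le t^{-2+1/r} = o(t^{1+1/(2r)})$, so this is fine.)

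The heart of the argument is property~(\ref{Sprop}): for every $S$ with $t^{1-\delta} < s < t$, many edges have exactly $r-1$ vertices inside $S$ and one outside. I would handle this by a union bound over all such $S$ combined with a lower-tail Chernoff estimate. For a fixed $s$-set $S$, the number of $r$-sets with exactly $r-1$ vertices in $S$ and one outside is $\binom{s}{r-1}(t-s)$, and the number present in the random hypergraph has mean $\mu_S := p\binom{s}{r-1}(t-s) = \binom{s}{r-1}(t-s)\,t^{1-r+1/(2r)}$. By the Chernoff bound, the probability that fewer than $\tfrac15\binom{s}{r-1}(t-s)\,t^{1-r+1/(2r)}$ such edges appear — note the $\tfrac15$ leaves room to absorb the few deleted edges and lands comfortably below the $\tfrac1{10}$ target — is at most $e^{-c\,\mu_S}$ for an absolute constant $c$. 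The number of $s$-sets is at most $\binom{t}{s} \le t^{t-s}$ (writing it as $\binom{t}{t-s}$), so the union bound over all $S$ with $t^{1-\delta}<s<t$ succeeds provided $\mu_S$ dominates $(t-s)\log t$ for every such $s$. Since $s > t^{1-\delta}$ we have $\binom{s}{r-1} \ge (s/(r-1))^{r-1} \gtrsim t^{(1-\delta)(r-1)}$, so $\mu_S \gtrsim (t-s)\, t^{(1-\delta)(r-1) + 1 - r + 1/(2r)} = (t-s)\,t^{\,1/(2r) - \delta(r-1)}$, and with $\delta = 1/(5r^2)$ one has $1/(2r) - \delta(r-1) > 0$ (indeed this is exactly the inequality $1 - r + 1/(3r) + (1-\delta)(r-1) > 0$ used later in the paper, with a little slack), so $\mu_S / ((t-s)\log t) \to \infty$ and the union bound goes through. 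Then I would delete the short-cycle edges: each deletion removes at most one edge from each set $\{$edges with $r-1$ vertices in $S\}$, and since the total number deleted is $o(t^{1+1/(2r)}) = o(\mu_S)$ uniformly (using again $\mu_S \gtrsim t^{1+1/(2r) - \delta(r-1)}$... actually one should check $\mu_S$ is large enough; in the worst case $s$ close to $t^{1-\delta}$ and $t-s \approx t$, $\mu_S \approx t^{1 + 1/(2r) - \delta(r-1)}$, which still dominates the $o(t^{1+1/(2r)})$ deletion count since... hmm, this needs $1/(2r) > 1/(2r) - \delta(r-1)$, true), the surviving count is still at least $\tfrac1{10}\binom{s}{r-1}(t-s)t^{1-r+1/(2r)}$.

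The main obstacle I anticipate is exactly this last bookkeeping: making sure that the number of edges destroyed during the girth-cleaning step is genuinely negligible compared to the per-$S$ lower bound $\mu_S$ for the \emph{smallest} admissible $S$ (those with $s \approx t^{1-\delta}$), not just compared to the total number of edges. This is why the construction starts with $\tfrac15$ in the Chernoff step but only claims $\tfrac1{10}$ in the statement — there is a built-in buffer. If the crude "delete one edge per short cycle" accounting is too lossy (because a single edge could lie in many short cycles with the same $S$), one can instead argue that the expected number of \emph{edges} lying in some short cycle is $o(t^{1+1/(2r)})$ directly, and moreover for each fixed $S$ the expected number of edges counted by (\ref{Sprop}) that lie in a short cycle is small compared to $\mu_S$, then feed this into the union bound alongside the Chernoff estimate. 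Either route works; the second is cleaner because it keeps everything as a single union bound over $s$ and over the two bad events. Concentration and the sunflower-free/girth structure are all standard, so apart from this calibration the proof is routine.
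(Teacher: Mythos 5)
Your overall architecture --- the binomial random $r$-uniform hypergraph with $p=t^{1-r+\frac{1}{2r}}$, destroying the short cycles, a lower-tail Chernoff bound for the number of $S$-edges, and a union bound over $S$ --- is exactly the paper's, and the Chernoff-plus-union-bound step itself is fine: for every admissible $s$ one has $\mu_S\ge (t-s)\,t^{\frac{1}{2r}-\delta(r-1)}/(r-1)^{r-1}\gg (t-s)\log t$ since $\delta(r-1)<1/(2r)$. The genuine gap is in the deletion accounting, precisely where you flagged it, and neither of your two proposed fixes closes it. Your first route compares the global deletion count, which is $O(t^{\frac{1}{2}+\frac{1}{2r}+o(1)})$ (note that $\ell/(2r)$ reaches $(r+1)/(2r)>1/2$ at $\ell=r+1$), against a claimed uniform lower bound $\mu_S\gtrsim t^{1+\frac{1}{2r}-\delta(r-1)}$; but that bound on $\mu_S$ holds only when $t-s=\Omega(t)$. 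For $s$ close to $t$, say $s=t-1$, one has $\mu_S\approx t^{\frac{1}{2r}}$, which is swamped by the global deletion count, so the crude accounting fails exactly for the sets that the union bound treats most leniently.

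Your second route (bound, for each fixed $S$, the expected number of $S$-edges lying in short cycles and ``feed this into the union bound'') identifies the right quantity --- that expectation is indeed $o(\EE X)$ --- but expectation alone gives only a Markov bound of order $t^{-1/2}$ on the failure probability for a single $S$, whereas the union bound over the $\binom{t}{t-s}$ sets of size $s$ requires failure probability around $t^{-(t-s)}$; and the number of short cycles meeting $S$ has no obvious exponential upper tail. The missing idea is to delete a \emph{maximal edge-disjoint} family of copies of each $C_\ell$ (which still kills every short cycle, by maximality) and to control $Z_{S,\ell}$, the maximum number of pairwise edge-disjoint copies of $C_\ell$ each containing an $S$-edge: the number of $S$-edges destroyed is then at most $\sum_\ell \ell\, Z_{S,\ell}$, and $Z_{S,\ell}$ \emph{does} admit an exponential upper tail, namely $\PP(Z_{S,\ell}\ge c\,\EE Y_{S,\ell})<e^{-c(\log c-1)\EE Y_{S,\ell}}$ (Krivelevich's large-deviation inequality, which exploits the fact that a family of $k$ pairwise edge-disjoint copies is present with probability exactly $p^{\ell k}$). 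Choosing $c=\EE(X)/(20r\ell\,\EE Y_{S,\ell})$ makes the failure probability at most $e^{-\EE X}$, which survives the union bound. Without this (or an equivalent concentration device for the cycle-removal step) the proof does not go through.
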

 
 \begin{proof}Consider the binomial random $r$-uniform hypergraph $H \sim H^{(r)}(t, p)$ with $t$ vertices where each edge appears independently with probability $p=t^{1-r+\frac{1}{2r}}$. For each $2\le \ell \le r+1$, Let $C_{\ell}$ denote the cycle of length $\ell$ (this is unique except for $\ell=2$) and let $\mathcal{B_{\ell}}$ denote a maximal collection of edge-disjoint copies of $C_{\ell}$ in $H$. Form $F^*$ by starting with $H$ and deleting all $\ell$ edges from every copy of $C_{\ell}$ in $\mathcal{B_{\ell}}$ for all $2 \le \ell \le r+1$. Then, by the maximality of $\mathcal{B_{\ell}}$, the remaining hypergraph $F^*$ has girth at least $r+2$. We will now show that with high probability $F^*$ has the required property.  

    Pick $S  \subset V(F^*)$ of size $s$ where $t^{1-\delta} < s < t$. Call an edge in $H$ with exactly one vertex outside $S$ an $S$-edge.
    Let $X=X_{S}$ be the number of $S$-edges, let $Y_{\ell}=Y_{S, \ell}$ be the number of copies of $C_{\ell}$ that contain at least one $S$-edge and let $Z_{\ell}=Z_{S, \ell}$ be the maximal number of pairwise edge-disjoint copies of $C_{\ell}$, each containing at least one $S$-edge. Obviously, $Z_{\ell} \le Y_{\ell}$. Define the event $$A_{\ell}=A_{S, \ell}= \{X> 10 r \ell \, Z_{\ell}\}.$$ We note that if $A_{S,\ell}$ holds for every appropriate $S$, and $2\le \ell \le r+1$,  then the number of $S$-edges in $F^*$ is at least 
    $$X- \sum_{\ell=2}^{r+1} \ell \, Z_{\ell} \ge |X| - \sum_{\ell=2}^{r+1} \frac{|X|}{10r}>(0.9) |X|.$$ 
    Moreover, $\EE(X) = {s \choose r-1} (t-s)p$, so if it is also the case that $X > \EE(X)/2$, then the number of $S$-edges in $F^*$ is at least $(0.4)\EE(X)$ and $S$ satisfies (\ref{Sprop}).

We see that  
$$ \EE(Y_{\ell}) <  {s \choose r-1}(t-s) t^{\ell(r-1)-r} p^{\ell}.$$
As $p=t^{1-r+\frac{1}{2r}}$ and $\ell\le r+1$, we have $p^{\ell}t^{\ell(r-1)-r} \ll p$. Therefore $\EE(Y_{\ell}) \ll \EE(X)$. Now $$\PP(\overline{A_{\ell}}) =\PP(X \le 10r\ell \, Z_{\ell}) \le
\PP\left(X \le  \frac{\EE(X)}{2}\right) +\PP\left(Z_{\ell} \ge \frac{\EE(X)}{20r\ell}\right). 
$$
 Krivelevich~\cite[Claim 1]{Kr} proved that in this setup, for any constant $c>0$,
$$\PP(Z_{\ell}\ge c \, \EE(Y_{\ell})) < e^{-c\,(\log c-1)\EE(Y_{\ell})}.$$
Using this and $\EE(Y_{\ell}) \ll \EE(X)$ we have
$$\PP\left(Z_{\ell} \ge \frac{\EE(X)}{20 r \ell}\right) 
=\PP\left(Z_{\ell} \ge \frac{\EE(X)}{20 r \ell \, \EE(Y_{\ell})} \EE(Y_{\ell})\right) 
<e^{\frac{-\EE(X)}{20r\ell} (\log(\frac{\EE(X)}{20r\ell \,\EE(Y_{\ell})})-1)} < e^{-\EE(X)}.
$$
The standard Chernoff bound gives 
$\PP(X \le \EE(X)/2) < e^{-\EE(X)/8}$ so altogether we obtain $\PP(\overline{A_{\ell}})< e^{-\EE(X)/9}$. Using the union bound, the probability that there exists an $S$ that fails (\ref{Sprop}) is at most
$$\sum_{s=t^{1-\delta}}^{t-1} {t \choose s} e^{-{s \choose r-1}(t-s)p/9} < e^{ \log t +(t-s)\log t - {s \choose r-1}(t-s)p/9}.$$
The power of $t$ in $s^{r-1} p$ is at least $1-r+1/2r+(1-\delta)(r-1)>0$ as $\delta < 1/2r^2$ and hence the quantity above vanishes for large $t$. We conclude that  (\ref{Sprop}) holds in $F^*$ with high probability. 
 \end{proof}

\end{document}